\documentclass[a4paper,11pt]{article}
\usepackage{amsmath}
\usepackage{amsfonts}
\usepackage{supertabular}
\usepackage[latin9]{inputenc}
\usepackage[english]{varioref}
\usepackage{dcolumn}
\usepackage[height=22cm , width = 16cm , top = 4cm , left = 3cm, a4paper]{geometry}
\usepackage[a4paper]{geometry}
\usepackage[final]{graphicx}
\usepackage{epsfig}
\usepackage{pstricks}
\usepackage{psfrag}
\usepackage{rotating}
\usepackage{supertabular}
\usepackage{booktabs}
\usepackage{delarray}
\usepackage{rotating}
\usepackage[numbers]{natbib}
\usepackage{subfigure}
\usepackage{nextpage}
\usepackage{layout}
\usepackage{amsthm}
\usepackage{dsfont}
\usepackage{hyperref}
\usepackage[hypcap]{caption}
\usepackage[autostyle]{csquotes}
\usepackage{color}
\usepackage{calrsfs}

\numberwithin{equation}{section}
{                     
{                     
{                       
{                       

%

%
\theoremstyle{plain}
\newtheorem{thm}{Theorem}[section]

\newtheorem{lem}[thm]{Lemma}

\newtheorem{definition}[thm]{Definition}

\newtheorem{hypp}[thm]{Hypotheses}
\parindent0cm
\newcommand{\enter}{\bigskip}

\date{}


\begin{document}
 \author{Prasanta Kumar Barik$^1$, {Ankik Kumar Giri$^1$}\footnote{Corresponding author. Tel +91-1332-284818 (O);  Fax: +91-1332-273560  \newline{\it{${}$ \hspace{.3cm} Email address: }}ankikgiri.fma@iitr.ac.in/ankik.math@gmail.com} and Philippe Lauren\c{c}ot$^2$\\
\footnotesize  ${}^1$Department of Mathematics, Indian Institute of Technology Roorkee,\\ \small{ Roorkee-247667, Uttarakhand, India}\\
\footnotesize  ${}^2$Institut de Mathématiques de Toulouse
 UMR 5219, Université de Toulouse, CNRS \\ \small{ F-31062 Toulouse Cedex 9, France}
  }

\title {Mass-conserving solutions to the Smoluchowski coagulation equation with singular kernel}

\maketitle


\begin{quote}
{\small {\em\bf Abstract.} Global weak solutions to the continuous Smoluchowski coagulation equation (SCE) are constructed for coagulation kernels featuring an algebraic singularity for small volumes and growing linearly for large volumes, thereby extending previous results obtained in Norris (1999) and Cueto Camejo \& Warnecke (2015). In particular, linear growth at infinity of the coagulation kernel is included and the initial condition may have an infinite second moment. Furthermore, all weak solutions (in a suitable sense) including the ones constructed herein are shown to be mass-conserving, a property which was proved in Norris (1999) under stronger assumptions. The existence proof relies on a weak compactness method in $L^1$ and a by-product of the analysis is that both conservative and non-conservative approximations to the SCE lead to weak solutions which are then mass-conserving.\enter
}
\end{quote}

{\bf Keywords:} Coagulation; Singular coagulation kernels; Existence; Mass-conserving solutions\\
{\bf MSC (2010):} Primary: 45J05, 45K05; Secondary: 34A34, 45G10.

\section{Introduction}\label{existintroduction1}
The kinetic process in which particles undergo changes in their physical properties is called a particulate process. The study of particulate processes is a well-known subject in various branches of engineering, astrophysics, physics, chemistry and in many other related areas. During the particulate process, particles merge to form larger particles or break up into smaller particles. Due to this process, particles change their size, shape and volume, to name but a few. There are various types of particulate processes such as coagulation, fragmentation, nucleation and growth for instance. In particular, this article mainly deals with the coagulation process which is governed by the Smoluchowski coagulation equation (SCE). In this process, two particles coalesce to form a larger particle at a particular instant.

The SCE is a nonlinear integral equation which describes the dynamics of evolution of the concentration $g(\zeta,t)$ of particles of volume $\zeta>0$ at time $t \geq 0$ \cite{Smoluchowski:1917}. The evolution of $g$ is given by
\begin{equation}\label{sce}
\frac{\partial g(\zeta,t)}{\partial t}  = \mathcal{B}_c(g)({\zeta}, t) -\mathcal{D}_c(g)({\zeta}, t),\ \qquad (\zeta,t)\in (0,\infty)^2,\
\end{equation}
with initial condition
\begin{equation}\label{1in1}
g(\zeta,0) = g^{in}(\zeta)\geq 0,\ \qquad \zeta\in (0,\infty),\
\end{equation}
where the operator $\mathcal{B}_c$ and $\mathcal{D}_c$ are expressed as
\begin{equation}\label{Birthterm}
 \mathcal{B}_c(g)({\zeta}, t):=\frac{1}{2} \int_{0}^{\zeta} \Psi (\zeta -\eta,\eta)g(\zeta -\eta,t)g(\eta,t)d\eta
\end{equation}
and
\begin{equation}\label{Deathterm}
\mathcal{D}_c(g)({\zeta}, t):=\int_{0}^{\infty} \Psi(\zeta,\eta)g(\zeta,t)g(\eta,t)d\eta.
\end{equation}
 Here $\frac{\partial g(\zeta,t)}{\partial t}$ represents the time partial derivative of the concentration of  particles of volume $\zeta$ at time $t$. In addition, the non-negative quantity $\Psi(\zeta, \eta)$ denotes the interaction rate at which particles of volume $\zeta$ and particles of volume $\eta$ coalesce to form larger particles. This rate is also known as the coagulation kernel or coagulation coefficient. The first and last terms $\mathcal{B}_c(g)$ and  $\mathcal{D}_c(g)$ on the right-hand side to (\ref{sce}) represent the formation and disappearance of particles of volume $\zeta$ due to coagulation events, respectively.

Let us define the total mass (volume) of the system at time $t\ge 0$ as:
\begin{equation}\label{Totalmass}
 \mathcal{M}_1(g)(t):=\int_0^{\infty}{\zeta} g(\zeta,t)d{\zeta}.
\end{equation}

According to the conservation of matter, it is well known that the total mass (volume) of particles is neither created nor destroyed. Therefore, it is expected that the total mass (volume) of the system remains conserved throughout the time evolution prescribed by \eqref{sce}--\eqref{1in1}, that is, $\mathcal{M}_1(g)(t)=\mathcal{M}_1(g^{in})$ for all $t\ge 0$. However, it is worth to mention that, for the multiplicative coagulation kernel $\Psi(\zeta, \eta)=\zeta \eta$, the total mass conservation fails for the SCE at finite time $t=1$, see \cite{Leyvraz:1981}. The physical interpretation is that the lost mass corresponds to \enquote{particles of infinite volume} created by a runaway growth in the system due to the very high rate of coalescence of very large particles. These particles, also referred to as \enquote{giant particles} \cite{Aldous:1999} are interpreted in the physics literature as a different macroscopic phase, called a \emph{gel}, and its occurrence is called the \emph{sol-gel transition} or \emph{gelation transition}. The earliest time $T_g \geq 0$ after which mass conservation no longer holds is called the \emph{gelling time} or \emph{gelation time}.

Since the works by Ball \& Carr \cite{Ball:1990} and Stewart \cite{Stewart:1989}, several articles have been devoted to the existence and uniqueness of solutions to the SCE for coagulation kernels which are bounded for small volumes and unbounded for large volumes, as well as to the mass conservation and gelation phenomenon, see \cite{Dubovskii:1996, Escobedo:2002, Escobedo:2003, Giri:2012, Laurencot:2002L, Norris:1999, Stewart:1990}, see also the survey papers \cite{Aldous:1999, Laurencot:2015, Mischler:2004} and the references therein. However, to the best of our knowledge, there are fewer  articles in which existence and uniqueness of solutions to the SCE with singular coagulation rates have been studied, see \cite{Camejo:2015I, Camejo:2015II, Escobedo:2005, Escobedo:2006, Norris:1999}. In \cite{Norris:1999}, Norris  investigates the existence and uniqueness of solutions to the SCE locally in time  when the coagulation kernel satisfies
\begin{equation}
\Psi({\zeta}, {\eta}) \leq \phi({\zeta}) \phi({\eta}),\ \qquad (\zeta,\eta)\in (0,\infty)^2,\ \label{CK1}
\end{equation}
for some sublinear function $\phi: (0, \infty) \rightarrow [0, \infty)$, {that is, $\phi$ enjoys the property $\phi(a{\zeta})\leq a\phi({\zeta})$ for all $\zeta \in (0, \infty)$ and $a \geq 1$, and the initial condition $g^{in}$ belongs to $L^1((0,\infty);\phi(\zeta)^2d\zeta)$. Mass-conservation is also shown as soon as there is $\varepsilon>0$ such that $\phi(\zeta)\ge \varepsilon\zeta$ for all $\zeta\in (0,\infty)$. In \cite{Escobedo:2006, Escobedo:2005}, global existence, uniqueness, and mass-conservation are established for coagulation rates of the form $\Psi({\zeta}, {\eta}) = {\zeta}^{\mu_1}{\eta}^{\mu_2}+{\zeta}^{\mu_2}{\eta}^{\mu_1}$ with $-1\leq \mu_1 \leq \mu_2 \leq 1$, $\mu_1+\mu_2 \in[0, 2]$, and $(\mu_1, \mu_2)\neq (0, 1)$.} Recently, global existence of weak solutions to the SCE for coagulation kernels satisfying
\begin{equation*}
\Psi({\zeta}, {\eta}) \leq k^{*}(1+{\zeta}+{\eta})^{\lambda}({\zeta}{\eta})^{-\sigma},\ \qquad (\zeta,\eta)\in (0,\infty)^2,\
\end{equation*}
with $\sigma \in [0, 1/2]$, $\lambda -\sigma \in [0,1)$, and $k^{*}>0$, is obtained in \cite{Camejo:2015I} and further extended in \cite{Camejo:2015II} to the broader class of coagulation kernels
\begin{equation}
\Psi({\zeta}, {\eta}) \leq k^{*} (1+{\zeta})^{\lambda}(1+{\eta})^{\lambda}({\zeta}{\eta})^{-\sigma},\ \qquad (\zeta,\eta)\in (0,\infty)^2,\ \label{CK2}
\end{equation}
with $\sigma \geq 0$, $\lambda -\sigma \in [0,1)$,  and $k^{*}>0$. In \cite{Camejo:2015II}, multiple fragmentation is also included and uniqueness is shown for the following restricted class of coagulation kernels
\begin{equation*}
\Psi_2({\zeta}, {\eta}) \leq k^{*}({\zeta}^{-\sigma}+{\zeta}^{\lambda-\sigma}) ({\eta}^{-\sigma}+{\eta}^{\lambda-\sigma}),\ \qquad (\zeta,\eta)\in (0,\infty)^2,\
\end{equation*}
where $\sigma \geq 0$ and $\lambda -\sigma \in [0, 1/2]$.

The main aim of this article is to extend and complete the previous results in two directions. We actually consider coagulation kernels satisfying the growth condition \eqref{CK1} for the non-negative function
\begin{equation*}
\phi_\beta(\zeta) := \max\left\{ \zeta^{-\beta}, \zeta \right\},\ \qquad \zeta\in (0,\infty),\
\end{equation*}
and prove the existence of a global mass-conserving solution of the SCE (\ref{sce})--(\ref{1in1}) with initial conditions in $L^1((0,\infty);(\zeta^{-2\beta}+\zeta)d\zeta)$, thereby removing the finiteness of the second moment required to apply the existence result of \cite{Norris:1999} and relaxing the assumption $\lambda<\sigma+1$ used in \cite{Camejo:2015II} for coagulation kernels satisfying \eqref{CK2}. Besides this, we show that any weak solution in the sense of Definition~\ref{definition} below is mass-conserving, a feature which was enjoyed by the solution constructed in \cite{Norris:1999} but not investigated in \cite{Camejo:2015I, Camejo:2015II}. An important consequence of this property is that it gives some flexibility in the choice of the method to construct a weak solution to the SCE (\ref{sce})--(\ref{1in1}) since it will be mass-conserving whatever the approach. Recall that there are two different approximations of the SCE \eqref{sce} by truncation have been employed in recent years, the so-called conservative and non-conservative approximations, see \eqref{Nonconservativedeath} below. While it is expected and actually verified in several papers that the conservative approximation leads to a mass-conserving solution to the SCE, a similar conclusion is not awaited when using the non-conservative approximation which has rather been designed to study the gelation phenomenon, in particular from a numerical point of view \cite{Filbet:2004II, Bourgade:2008}. Still, it is by now known that, for the SCE with locally bounded coagulation kernels growing at most linearly at infinity, the non-conservative approximation also allows one to construct mass-conserving solutions \cite{Filbet:2004I, Barik:2017}. The last outcome of our analysis is that, in our case, the conservative and non-conservative approximations can be handled simultaneously and both lead to a weak solution to the SCE which might not be the same due to the lack of a general uniqueness result but is mass-conserving.

We now outline the results of the paper: In the next section, we state precisely  our hypotheses on coagulation kernel and on the initial data together with the definition of solutions and the main result. In Section 3, all weak solutions are shown to be mass-conserving.  Finally, in the last section, the existence of a weak solution to the SCE \eqref{sce}--\eqref{1in1} is obtained by using a weak $L^1$ compactness method applied to either the non-conservative or the conservative approximations of the SCE.

 \section{Main result}
We assume that the coagulation kernel $\Psi$ satisfies the following hypotheses.
\begin{hypp}\label{hyppmcs}
(H1) $\Psi$ is a non-negative measurable function on $(0,\infty) \times (0,\infty)$,
\\
(H2) There are $\beta>0$ and $k>0$ such that
\begin{equation*}
\begin{array}{ll}
0 \le \Psi(\zeta,\eta) = \Psi(\eta,\zeta) \le k (\zeta\eta)^{-\beta},\ & (\zeta,\eta)\in (0,1)^2, \\
0 \le \Psi(\zeta,\eta) = \Psi(\eta,\zeta) \le k \eta \zeta^{-\beta},\ & (\zeta,\eta)\in (0,1)\times (1,\infty), \\
0 \le \Psi(\zeta,\eta) = \Psi(\eta,\zeta) \le k (\zeta+\eta),\ & (\zeta,\eta)\in (1,\infty)^2.
\end{array}
\end{equation*}
Observe that (H2) implies that
\begin{equation*}
\Psi(\zeta,\eta) \le k \max\left\{ \zeta^{-\beta} , \zeta \right\} \max\left\{ \eta^{-\beta} , \eta \right\},\ \qquad (\zeta,\eta)\in (0,\infty)^2.
\end{equation*}
\end{hypp}

Let us now mention the following interesting singular coagulation kernels satisfying hypotheses~\ref{hyppmcs}.

\begin{itemize}
\item[(a)] Smoluchowski's coagulation kernel \cite{Smoluchowski:1917} (with $\beta=1/3$)
\begin{equation*}
\Psi(\zeta,\eta)= \left( \zeta^{1/3} + \eta^{1/3} \right) \left( \zeta^{-1/3} + \eta^{-1/3} \right),\ \qquad (\zeta,\eta)\in (0,\infty)^2.
\end{equation*}
\item[(b)] Granulation kernel \cite{Kapur:1972}
\begin{equation*}
\Psi(\zeta, \eta) = \frac{(\zeta +\eta)^{\theta_1}}{({\zeta}{\eta})^{\theta_2}},~~ \mbox{where}~~ {\theta_1} \leq 1 \ \text{and}\ {\theta_2} \geq 0.
\end{equation*}
\item[(c)] Stochastic stirred froths \cite{Clark:1999}
\begin{equation*}
\Psi(\zeta, \eta) = (\zeta\eta)^{-\beta},~~ \mbox{where}~~ \beta > 0.
\end{equation*}
\end{itemize}

Before providing the statement of Theorem~\ref{thm1}, we recall the following definition of weak solutions to the SCE (\ref{sce})--(\ref{1in1}). We set $L^1_{-2\beta, 1}(0, \infty):=L^1((0, \infty ) ; ({\zeta}^{-2\beta}+\zeta)d{\zeta})$.

\begin{definition}\label{definition}
Let $T\in (0,\infty]$ and $g^{in}\in L_{-2\beta,1}^1(0,\infty)$, $g^{in}\ge 0$ a.e. in $(0,\infty)$. A non-negative real valued function $g=g({\zeta},t)$ is a weak solution to equations (\ref{sce})--(\ref{1in1}) on $[0,T)$ if $g \in \mathcal{C}({[0,T)};L^1(0, \infty))\bigcap L^{\infty}(0,T;L^1_{-2\beta, 1}(0, \infty))$
and satisfies
\begin{align}\label{wsce}
\int_0^{\infty}[ g({\zeta},t) - g^{in}({\zeta})]&\omega({\zeta})d{\zeta}=\frac{1}{2}\int_0^t \int_0^{\infty} \int_{0}^{\infty}\tilde{\omega}({\zeta},{\eta}) \Psi({\zeta},{\eta})g({\zeta},s)g({\eta},s)d{\eta}d{\zeta}ds,
\end{align}
for every $t\in (0,T)$ and $\omega \in L^{\infty}(0, \infty)$, where
\begin{equation*}
\tilde{\omega} ({\zeta},{\eta}):=\omega({\zeta}+{\eta})-\omega ({\zeta})-\omega({\eta}),  \qquad (\zeta,\eta)\in (0,\infty)^2.
\end{equation*}
\end{definition}

Now, we are in a position to state the main theorem of this paper.
\begin{thm}\label{thm1}
Assume that the coagulation kernel satisfies \textbf{hypotheses}~$(H1)$--$(H2)$ and consider a non-negative initial condition $g^{in}\in L^1_{-2\beta, 1}(0, \infty)$. There exists at least one mass-conserving weak solution $g$ to the SCE (\ref{sce})--(\ref{1in1}) on $[0,\infty)$, that is, $g$ is a weak solution to (\ref{sce})--(\ref{1in1}) in the sense of Definition~\ref{definition} satisfying $\mathcal{M}_1(g)(t) = \mathcal{M}_1(g^{in})$ for all $t\ge 0$, the total mass $\mathcal{M}_1(g)$ being defined in \eqref{Totalmass}.
\end{thm}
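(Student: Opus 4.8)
The plan is to construct the solution by a weak $L^1$-compactness argument applied to a truncated version of the SCE, in the spirit of Stewart and of Lauren\c{c}ot, while simultaneously treating the conservative and non-conservative truncations. First I would fix $n\ge 1$ and consider the coagulation kernel truncated to volumes in $(0,n)$, replacing the death term either by the conservative truncation $\int_0^{n-\zeta}\Psi(\zeta,\eta)g(\zeta,s)g(\eta,s)\,d\eta$ or by the non-conservative one $\int_0^{n}\Psi(\zeta,\eta)g(\zeta,s)g(\eta,s)\,d\eta$; in both cases the truncated kernel is bounded on $(0,n)^2$ except near the origin, where the singularity $(\zeta\eta)^{-\beta}$ is integrable against $L^1_{-2\beta,1}$ data after one further regularisation of the kernel near $\zeta=0$ (e.g. replacing $\zeta^{-\beta}$ by $(\zeta+1/n)^{-\beta}$). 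Standard ODE-in-Banach-space theory (Picard iteration in $L^1((0,n);(\zeta^{-2\beta}+\zeta)\,d\zeta)$) then yields a unique non-negative solution $g_n$ on $[0,\infty)$, with $\mathcal{M}_1(g_n)(t)\le \mathcal{M}_1(g^{in})$ (equality in the conservative case).

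Next I would establish the uniform estimates needed for compactness. The key ones are: (i) a uniform bound on $\int_0^\infty(\zeta^{-2\beta}+\zeta)g_n(\zeta,t)\,d\zeta$, obtained by testing with $\omega(\zeta)=\zeta^{-2\beta}$ and $\omega(\zeta)=\zeta$ and using (H2) together with the sublinearity encoded in $\phi_\beta$ — here the linear growth at infinity forces a Gronwall argument rather than an exact conservation law, giving a bound that grows in $t$ but is finite on every $[0,T]$; (ii) a uniform de la Vall\'ee-Poussin-type estimate $\int_0^\infty \Phi(\zeta)g_n(\zeta,t)\,d\zeta\le C(T)$ for some convex function $\Phi$ with superlinear growth, and likewise a control of the behaviour near $\zeta=0$ via a weight vanishing faster than $\zeta^{-2\beta}$ is singular, which together rule out loss of mass both to infinity and to the origin; (iii) equi-integrability and a time-equicontinuity estimate for $\{g_n\}$ in $L^1(0,\infty)$, the latter coming directly from the weak formulation of the truncated equation and the bounds in (i)–(ii). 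By a refined Arzel\`a–Ascoli / Dunford–Pettis argument (the version with values in the weak topology of $L^1$, as in Lauren\c{c}ot–Mischler) one extracts a subsequence with $g_n\to g$ in $\mathcal{C}([0,T];w\text{-}L^1(0,\infty))$ for every $T$, and $g$ inherits the bounds, so $g\in \mathcal{C}([0,T);L^1)\cap L^\infty(0,T;L^1_{-2\beta,1})$.

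Then I would pass to the limit in the weak formulation. For a fixed $\omega\in L^\infty(0,\infty)$ the difficulty is the double integral $\iint \tilde\omega(\zeta,\eta)\Psi(\zeta,\eta)g_n(\zeta,s)g_n(\eta,s)\,d\eta\,d\zeta$: one splits the domain into a region where $\zeta,\eta$ are both bounded away from $0$ and $\infty$ (where the integrand converges by weak–weak convergence and boundedness of the kernel, using that products of weakly convergent sequences converge when one factor is in a compact set — here handled by the equi-integrability) and the complementary regions near $0$, near $\infty$, and the truncation tail $\{\zeta+\eta\ge n\}$, each of which is made uniformly small by the estimates (i)–(ii); note $|\tilde\omega(\zeta,\eta)|\le 3\|\omega\|_\infty$ helps near the origin while near infinity one uses $|\tilde\omega|\le$ (something controlled by the superlinear weight $\Phi$). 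This yields that $g$ is a weak solution in the sense of Definition~\ref{definition}. Finally, mass conservation is \emph{not} proved by hand from the approximation — instead I would simply invoke the result of Section~3, which asserts that every weak solution in the sense of Definition~\ref{definition} is mass-conserving; this is precisely the flexibility advertised in the introduction, and it is what lets the non-conservative approximation produce a mass-conserving solution despite $\mathcal{M}_1(g_n)$ being only a supersolution.

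The main obstacle is the interplay of the two singularities of the kernel with the weak-compactness scheme: unlike the classical locally-bounded case, here one must simultaneously prevent mass from escaping to $\zeta=\infty$ (linear growth, handled by Gronwall plus a superlinear moment) \emph{and} prevent the solution from concentrating at $\zeta=0$ where $\Psi\sim(\zeta\eta)^{-\beta}$ blows up; controlling the death integral $\int_0^\infty \Psi(\zeta,\eta)g_n(\eta,s)\,d\eta$ uniformly for small $\zeta$ requires that the $\zeta^{-2\beta}$-moment bound in (i) be genuinely uniform in $n$, and getting that bound is the technical heart of the argument.
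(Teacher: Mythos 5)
Your proposal matches the paper's strategy in all essentials: a one-parameter family of truncated approximations covering both the conservative and non-conservative cases, a uniform $L^1_{-2\beta,1}$ bound, de la Vall\'ee-Poussin superlinear moments to control large volumes, an equi-integrability estimate for $\zeta^{-\beta}g_n$ near the origin, Arzel\`a--Ascoli and Dunford--Pettis to extract a limit in $\mathcal{C}_w([0,T];L^1)$, passage to the limit in the weak formulation, and then invocation of the Section~3 theorem to upgrade the weak solution to a mass-conserving one. The only small inaccuracy is where Gronwall is needed: the paper obtains the $L^1_{-2\beta,1}$ bound \emph{uniformly in time without Gronwall} -- testing with $\omega(\zeta)=(\zeta+\delta)^{-2\beta}$ makes $H^\theta_{\omega,n}\le 0$ by the subadditivity-type inequality $(\zeta+\eta+\delta)^{-2\beta}\le(\zeta+\delta)^{-2\beta}$, so the singular moment is nonincreasing, while \eqref{masstn} already bounds the first moment -- and Gronwall enters only in the superlinear-moment estimate for $\sigma_1$ (and in the $\sigma_2$ equi-integrability bound).
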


\section{Weak solutions are mass-conserving}

In this section, we establish that any weak solution $g$ to (\ref{sce})--(\ref{1in1}) on $[0,T)$, $T\in (0,\infty]$, in the sense of Definition~\ref{definition} is mass-conserving, that is, satisfies
\begin{equation}
\mathcal{M}_1(g)(t) = \mathcal{M}_1(g^{in}),\ \qquad t\ge 0. \label{PhL0}
\end{equation}
To this end, we adapt an argument designed in \cite[Section~3]{Ball:1990} to investigate the same issue for the discrete coagulation-fragmentation equations and show that the behaviour of $g$ for small volumes required in Definition~\ref{definition} allows us to control the possible singularity of $\Psi$.

\begin{thm}\label{themcs}
Suppose that $(H1)$--${(H2)}$ hold. Let $g$ be a weak solution to (\ref{sce})--(\ref{1in1}) on $[0, T)$ for some $T\in (0,\infty]$. Then $g$ satisfies the mass-conserving property \eqref{PhL0} for all $t\in (0,T)$.
\end{thm}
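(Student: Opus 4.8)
The plan is to test the weak formulation \eqref{wsce} with a truncated weight $\omega(\zeta) = \zeta \wedge R = \min\{\zeta, R\}$ for $R>1$ and then let $R\to\infty$. With this choice $\omega\in L^\infty(0,\infty)$, so \eqref{wsce} applies and gives
\begin{equation*}
\int_0^\infty (\zeta\wedge R)\,[g(\zeta,t)-g^{in}(\zeta)]\,d\zeta = \frac12\int_0^t\int_0^\infty\int_0^\infty \widetilde{(\cdot\wedge R)}(\zeta,\eta)\,\Psi(\zeta,\eta)\,g(\zeta,s)g(\eta,s)\,d\eta\,d\zeta\,ds.
\end{equation*}
First I would record the elementary sign and size properties of $\widetilde{\omega_R}(\zeta,\eta) := ((\zeta+\eta)\wedge R) - (\zeta\wedge R) - (\eta\wedge R)$: it is always $\le 0$ (subadditivity of $x\mapsto x\wedge R$), it vanishes when $\zeta+\eta\le R$, and it is bounded by $\min\{\zeta,\eta,R\}$ in absolute value, hence in particular $|\widetilde{\omega_R}(\zeta,\eta)|\le \zeta\wedge\eta$ pointwise and $|\widetilde{\omega_R}(\zeta,\eta)|\le R$. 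On the left-hand side, $\zeta\wedge R\uparrow\zeta$ monotonically, so by the monotone convergence theorem (using $\zeta g,\zeta g^{in}\in L^1$ from the definition of weak solution) the left-hand side converges to $\mathcal{M}_1(g)(t)-\mathcal{M}_1(g^{in})$ as $R\to\infty$.

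The crux is to show the right-hand side tends to $0$. Since the integrand is nonpositive, it suffices to produce an integrable dominating function independent of $R$, so that dominated convergence applies together with the pointwise limit $\widetilde{\omega_R}(\zeta,\eta)\to 0$ (which holds because for fixed $(\zeta,\eta)$ one has $\zeta+\eta\le R$ once $R$ is large). For the domination I would split $(0,\infty)^2$ according to Hypothesis (H2). Using the symmetric bound from (H2), namely $\Psi(\zeta,\eta)\le k\,\phi_\beta(\zeta)\phi_\beta(\eta)$ with $\phi_\beta(\zeta)=\max\{\zeta^{-\beta},\zeta\}$, together with $|\widetilde{\omega_R}(\zeta,\eta)|\le \zeta\wedge\eta\le (\zeta\eta)^{1/2}$, one gets
\begin{equation*}
|\widetilde{\omega_R}(\zeta,\eta)|\,\Psi(\zeta,\eta) \le k\,(\zeta\wedge\eta)\,\phi_\beta(\zeta)\,\phi_\beta(\eta).
\end{equation*}
On $(0,1)^2$ this is $\le k(\zeta\eta)^{1/2}\zeta^{-\beta}\eta^{-\beta}= k\,\zeta^{1/2-\beta}\eta^{1/2-\beta}$; on $(0,1)\times(1,\infty)$ it is $\le k\,\zeta\cdot\zeta^{-\beta}\cdot\eta = k\,\zeta^{1-\beta}\eta$ (using $\zeta\wedge\eta=\zeta$ there), with the symmetric statement on $(1,\infty)\times(0,1)$; and on $(1,\infty)^2$ it is $\le k(\zeta\wedge\eta)(\zeta+\eta)\le 2k\,\zeta\eta$. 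In every region the bound is of the form $c\,\psi(\zeta)\psi(\eta)$ with $\psi(\zeta)=\zeta^{-\beta}\mathbf 1_{(0,1)}+\zeta\,\mathbf 1_{(1,\infty)}$ modulo the harmless exponent $1/2-\beta\ge -\beta$, i.e. it is controlled by $c(\zeta^{-2\beta}+\zeta)(\eta^{-2\beta}+\eta)$ up to a constant; more carefully, $(\zeta\wedge\eta)\phi_\beta(\zeta)\phi_\beta(\eta)\le (\zeta^{-2\beta}+\zeta^2)^{1/2}$-type manipulations reduce everything to the product of two copies of $\zeta^{-2\beta}+\zeta$. Since $g(\cdot,s)\in L^1_{-2\beta,1}(0,\infty)$ uniformly for $s\in[0,t]$ by Definition~\ref{definition}, the function $(\zeta,\eta,s)\mapsto (\zeta^{-2\beta}+\zeta)(\eta^{-2\beta}+\eta)g(\zeta,s)g(\eta,s)$ is integrable on $(0,\infty)^2\times(0,t)$, giving the required $R$-independent dominating function.

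With domination in hand, the Lebesgue dominated convergence theorem yields that the right-hand side converges to $\frac12\int_0^t\int_0^\infty\int_0^\infty 0\cdot\Psi\, g\,g = 0$, so passing to the limit $R\to\infty$ in the truncated identity gives $\mathcal{M}_1(g)(t)=\mathcal{M}_1(g^{in})$ for every $t\in(0,T)$, which is \eqref{PhL0}. The main obstacle is purely the integrability bookkeeping near $\zeta=0$: one must make sure the factor $\zeta\wedge\eta$ coming from $|\widetilde{\omega_R}|$ is exactly what tames the singularity $(\zeta\eta)^{-\beta}$ of $\Psi$ so that only the weight $\zeta^{-2\beta}$ (and not a worse power) is needed — this is why the definition asks for $g\in L^\infty(0,T;L^1_{-2\beta,1})$ rather than merely $L^1$. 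Everything else is a routine application of monotone and dominated convergence.
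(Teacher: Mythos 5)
Your proof is correct but takes a genuinely different and cleaner route than the paper's. The paper tests \eqref{wsce} with the \emph{hard} cutoff $\omega_q(\zeta)=\zeta\chi_{(0,q)}(\zeta)$ (Lemma~\ref{lemma1}), for which $\tilde\omega_q(\zeta,\eta)=-(\zeta+\eta)$ on the corner $\{\zeta,\eta<q,\ \zeta+\eta\ge q\}$ — a quantity that is \emph{not} bounded by $\zeta\wedge\eta$. Controlling it requires a second test function $\chi_{[q,\infty)}$ (Lemma~\ref{lemma2}) and a delicate cancellation between two divergent-looking terms multiplied by $q$ (Lemma~\ref{lemma2}~$(ii)$ combined with Lemma~\ref{lemma4}~$(ii)$). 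You instead use the \emph{soft} cutoff $\omega_R(\zeta)=\zeta\wedge R$, whose subadditivity gives $\tilde\omega_R\le 0$ and — crucially — $|\tilde\omega_R(\zeta,\eta)|\le \zeta\wedge\eta$ \emph{everywhere}. This single pointwise bound simultaneously tames the $(\zeta\eta)^{-\beta}$ singularity at the origin and the linear growth at infinity, producing an $R$-independent $L^1$ dominating function of the form $C(\zeta^{-2\beta}+\zeta)(\eta^{-2\beta}+\eta)g(\zeta,s)g(\eta,s)$ and allowing a one-shot dominated convergence argument. That buys a significantly shorter and more transparent proof (one test function, one appeal to DCT) at the cost of nothing. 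The paper's more elaborate multi-lemma scheme follows the Ball--Carr template for the discrete equations and yields some intermediate information (Lemma~\ref{lemma2}~$(ii)$) that you do not recover, but nothing that is needed for Theorem~\ref{themcs} itself.

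One small inaccuracy in the write-up: you first state the blanket bound $|\tilde\omega_R|\Psi\le k(\zeta\wedge\eta)\phi_\beta(\zeta)\phi_\beta(\eta)$ and then on $(1,\infty)^2$ write that ``it is $\le k(\zeta\wedge\eta)(\zeta+\eta)$''; this inequality points the wrong way, since $\phi_\beta(\zeta)\phi_\beta(\eta)=\zeta\eta\ge\zeta+\eta$ there. The blanket bound $(\zeta\wedge\eta)\zeta\eta$ would require a finite second moment and is \emph{not} integrable against $g\otimes g$. The correct move — which is what your region-by-region computation effectively does — is to bypass the $\phi_\beta$-product on $(1,\infty)^2$ and use $(H2)$ directly: $\Psi\le k(\zeta+\eta)$, whence $|\tilde\omega_R|\Psi\le k(\zeta\wedge\eta)(\zeta+\eta)\le 2k\zeta\eta$. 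With that minor rewording the domination is airtight and the proof is complete.
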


In order to prove Theorem~\ref{themcs}, we need the following sequence of lemmas.

\begin{lem}\label{lemma1}
Assume that $(H1)$--{$(H2)$} hold. Let $g$ be a weak solution to (\ref{sce})--(\ref{1in1}) on $[0, T)$. Then, for $q\in (0,\infty)$ and $t\in (0,T)$,
\begin{align}\label{lemma11}
\int_0^q \zeta g(\zeta, t)  d\zeta -\int_0^q \zeta g^{in}(\zeta )  d\zeta = -\int_{0}^{t} \int_0^q \int_{q- \zeta}^{\infty} \zeta \Psi(\zeta, \eta)g(\zeta, s) g(\eta, s) d\eta d\zeta ds.
\end{align}
\end{lem}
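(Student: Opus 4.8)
\textit{Proof plan.} The natural move is to test the weak formulation \eqref{wsce} against the truncated weight $\omega_q(\zeta) := \zeta\,\mathds{1}_{(0,q)}(\zeta)$. Since $\omega_q\in L^\infty(0,\infty)$ with $\|\omega_q\|_\infty=q$, it is an admissible test function in Definition~\ref{definition}, and the left-hand side of \eqref{wsce} is then exactly $\int_0^q \zeta\,[g(\zeta,t)-g^{in}(\zeta)]\,d\zeta$, i.e.\ the left-hand side of \eqref{lemma11}. So the whole task reduces to identifying the right-hand side of \eqref{wsce} for this particular choice of weight.

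Before that, I would record that the double integral in \eqref{wsce} is absolutely convergent for \emph{any} $\omega\in L^\infty(0,\infty)$, so that the non-continuous weight $\omega_q$ causes no trouble and Tonelli's theorem may be freely used below. Indeed, $|\tilde\omega(\zeta,\eta)|\le 3\|\omega\|_\infty$, and the consequence of (H2) noted after Hypotheses~\ref{hyppmcs} gives $\Psi(\zeta,\eta)\le k\max\{\zeta^{-\beta},\zeta\}\max\{\eta^{-\beta},\eta\}$; combining this with the elementary inequality $\max\{\zeta^{-\beta},\zeta\}\le \zeta^{-2\beta}+\zeta$ (which holds because $\zeta^{-\beta}\le\zeta^{-2\beta}$ for $0<\zeta\le1$ and $\beta>0$) yields
\[
\int_0^\infty\!\!\int_0^\infty |\tilde\omega(\zeta,\eta)|\,\Psi(\zeta,\eta)\,g(\zeta,s)g(\eta,s)\,d\eta\,d\zeta \;\le\; 3k\|\omega\|_\infty\,\|g(s)\|_{L^1_{-2\beta,1}}^2,
\]
which is finite for a.e.\ $s$ and integrable on $(0,t)$ since $g\in L^\infty(0,T;L^1_{-2\beta,1}(0,\infty))$.

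The computational heart is then the evaluation of $\tilde\omega_q(\zeta,\eta)=\omega_q(\zeta+\eta)-\omega_q(\zeta)-\omega_q(\eta)$ via a short case distinction: $\tilde\omega_q(\zeta,\eta)=0$ if $\zeta+\eta\le q$ or if $\min\{\zeta,\eta\}>q$; $\tilde\omega_q(\zeta,\eta)=-(\zeta+\eta)$ if $\zeta\le q$, $\eta\le q$ and $\zeta+\eta>q$; and $\tilde\omega_q(\zeta,\eta)=-\zeta$ if $\zeta\le q<\eta$ (symmetrically $-\eta$ if $\eta\le q<\zeta$). Substituting into \eqref{wsce} and using the symmetry of $\Psi$ and of $(\zeta,\eta)\mapsto g(\zeta,s)g(\eta,s)$ — to turn $-(\zeta+\eta)$ into $-2\zeta$ on the symmetric region $\{\zeta\le q,\ \eta\le q,\ \zeta+\eta>q\}$ and to fuse the two mixed regions into $-2\zeta$ on $\{\zeta\le q<\eta\}$ — the prefactor $\tfrac12$ cancels and one is left with
\[
-\int_0^t\left[\,\int_{\{\zeta\le q,\ \eta\le q,\ \zeta+\eta>q\}}+\int_{\{\zeta\le q<\eta\}}\,\right]\zeta\,\Psi(\zeta,\eta)g(\zeta,s)g(\eta,s)\,d\eta\,d\zeta\,ds .
\]
For fixed $\zeta\in(0,q)$ the two $\eta$-domains are $(q-\zeta,q]$ and $(q,\infty)$, whose union is $(q-\zeta,\infty)$, and this produces precisely the triple integral on the right-hand side of \eqref{lemma11}.

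I do not anticipate a real obstacle: the only points needing a little care are the integrability bookkeeping of the second step (which legitimises the discontinuous weight $\omega_q$ and the use of Tonelli) and the slightly fussy region decomposition of $\tilde\omega_q$ together with the symmetrisation in the third step; the rest is routine.
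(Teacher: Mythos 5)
Your proof is correct and follows essentially the same route as the paper's: test \eqref{wsce} with $\omega_q(\zeta)=\zeta\chi_{(0,q)}(\zeta)$, carry out the case analysis of $\tilde\omega_q$, and symmetrize to absorb the factor $\tfrac12$; the only addition is your (welcome) explicit integrability check justifying Tonelli, which the paper leaves implicit. The sole slip is cosmetic: on the line $\zeta+\eta=q$ with $\zeta,\eta\in(0,q)$ one has $\tilde\omega_q=-(\zeta+\eta)$ rather than $0$, so the first case should read $\zeta+\eta<q$, but this is a Lebesgue-null set and does not affect the identity.
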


\begin{proof}
Set  $\omega (\zeta)=\zeta \chi_{(0, q)}(\zeta) $ {for $\zeta\in (0,\infty)$ and note that}
\begin{equation*}
\tilde{ \omega}(\zeta, \eta)=\begin{cases}
0,\ & \text{if}\ \zeta+\eta \in (0,q),\ \\
-(\zeta+\eta),\ &  \text{if}\ \zeta+\eta \geq q,\ (\zeta,\eta)\in (0,q)^2,\ \\
-\zeta, \ &  \text{if}\ (\zeta,\eta)\in (0,q)\times [q,\infty),\ \\
- \eta, \  &  \text{if}\ (\zeta,\eta) \in [q,\infty) \times (0,q),\ \\
0,\ &  \text{if}\ (\zeta,\eta)\in [q,\infty)^2.
\end{cases}
\end{equation*}
Inserting the above values of $\tilde{\omega}$ into \eqref{wsce} and using the symmetry of $\Psi$, we have
\begin{align*}
\int_0^{q}[ g({\zeta},t) - g^{in}({\zeta})] {\zeta} d{\zeta}=&\frac{1}{2}\int_0^t \int_0^{\infty} \int_{0}^{\infty}\tilde{\omega}({\zeta},{\eta}) \Psi({\zeta},{\eta})g({\zeta},s)g({\eta},s)d{\eta}d{\zeta}ds\nonumber\\
=&-\frac{1}{2}\int_0^t \int_0^{q} \int_{q-\zeta}^{q} (\zeta +\eta) \Psi({\zeta},{\eta})g({\zeta},s)g({\eta},s)d{\eta}d{\zeta}ds\nonumber\\
&-\frac{1}{2}\int_0^t \int_0^{q} \int_{q}^{\infty} \zeta \Psi({\zeta},{\eta})g({\zeta},s)g({\eta},s)d{\eta}d{\zeta}ds\nonumber\\
&-\frac{1}{2}\int_0^t \int_q^{\infty} \int_{0}^{q} \eta \Psi({\zeta},{\eta})g({\zeta},s)g({\eta},s)d{\eta}d{\zeta}ds\\
=&{-\int_0^t\int_0^q\int_{q-\zeta}^q \zeta \Psi({\zeta},{\eta})g({\zeta},s)g({\eta},s)d{\eta}d{\zeta}ds}\nonumber\\
&{-\int_0^t\int_0^q \int_q^\infty \zeta \Psi({\zeta},{\eta})g({\zeta},s)g({\eta},s)d{\eta}d{\zeta}ds},
\end{align*}
which completes the proof of Lemma~\ref{lemma1}.
\end{proof}

In order to complete the proof of Theorem~\ref{themcs}, it is sufficient to show that the right-hand side of (\ref{lemma11}) goes to zero as $q \to \infty$. The first step in that direction is the following result.

\begin{lem}\label{lemma2}
Assume that $(H1)$--{$(H2)$} hold. Let $g$ be a solution to (\ref{sce})--(\ref{1in1}) on $[0, T)$ and consider $t\in (0,T)$. Then
\begin{align*}
(i)\int_q^{\infty}[g(\zeta, t)-g^{in}(\zeta)]d\zeta=&- \frac{1}{2} \int_{0}^{t} \int_q^{\infty}\int_q^{\infty} \Psi(\zeta, \eta)g(\zeta, s) g(\eta, s)  d\eta d\zeta ds\\
&+ \frac{1}{2} \int_0^t \int_0^q \int_{q-\zeta}^{q} \Psi(\zeta, \eta)g(\zeta, s) g(\eta, s)  d\eta d\zeta ds,
\end{align*}

\begin{align*}
(ii)\lim_{q \to \infty} \int_{0}^{t} q \bigg[ \int_0^q \int_{q-\zeta}^{q} \Psi(\zeta, \eta)g(\zeta, s) g(\eta, s)  d\eta d\zeta- \int_q^{\infty}\int_q^{\infty} \Psi(\zeta, \eta)g(\zeta, s) g(\eta, s)  d\eta d\zeta \bigg]ds=0.
\end{align*}
\end{lem}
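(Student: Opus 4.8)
For part $(i)$ the plan is to repeat the argument of Lemma~\ref{lemma1} but with the weight $\omega(\zeta) = \chi_{(q,\infty)}(\zeta)$ instead of $\omega(\zeta) = \zeta\chi_{(0,q)}(\zeta)$. First I would compute the associated $\tilde\omega(\zeta,\eta) = \omega(\zeta+\eta) - \omega(\zeta) - \omega(\eta)$: it vanishes when $\zeta+\eta < q$ and also when both $\zeta>q$ and $\eta>q$; it equals $+1$ when $(\zeta,\eta)\in(0,q)^2$ with $\zeta+\eta\ge q$ (since then $\chi_{(q,\infty)}(\zeta+\eta)=1$ but $\chi_{(q,\infty)}(\zeta)=\chi_{(q,\infty)}(\eta)=0$); and it equals $-1$ on the two mixed regions $(0,q)\times(q,\infty)$ and $(q,\infty)\times(0,q)$. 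Substituting these values into the weak formulation \eqref{wsce} and using the symmetry of $\Psi$ to combine the two mixed-region integrals, the $-1$ contributions give $-\int_0^t\int_0^q\int_q^\infty \Psi g g\, d\eta d\zeta ds$ and the $+1$ contribution gives $\tfrac12\int_0^t\int_0^q\int_{q-\zeta}^q \Psi g g\, d\eta d\zeta ds$. To match the stated form one then rewrites $-\int_0^q\int_q^\infty \Psi g g$ using $\int_0^q\int_q^\infty = \tfrac12\int_0^q\int_q^\infty + \tfrac12\int_q^\infty\int_0^q$ (symmetry again) and observes that $\int_q^\infty\int_q^\infty = \int_q^\infty\int_0^\infty - \int_q^\infty\int_0^q$; a short bookkeeping step reorganizes the double integrals into $-\tfrac12\int_q^\infty\int_q^\infty \Psi g g$ plus the half $(q-\zeta,q)$ term, which is exactly $(i)$. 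I should double-check the integrability needed to justify inserting this bounded-but-not-compactly-supported $\omega$ into \eqref{wsce} and to split/recombine the iterated integrals; this follows from $g\in L^\infty(0,T;L^1_{-2\beta,1})$ together with the bounds in (H2), which make $\Psi(\zeta,\eta)g(\zeta,s)g(\eta,s)$ integrable on $(0,\infty)^2$ for a.e.\ $s$ (the singular part is controlled on $(0,1)^2$ by $(\zeta\eta)^{-\beta}$ paired against $\zeta^{-2\beta}+\zeta$, and the linear growth part on $(1,\infty)^2$ by $\zeta+\eta$).

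For part $(ii)$, the idea is to bound each of the two bracketed double integrals, multiplied by $q$, and show both tend to $0$. For the first term, $q\int_0^q\int_{q-\zeta}^q \Psi g g$, note that on this region $\zeta+\eta\ge q$, so at least one of $\zeta,\eta$ exceeds $q/2$; using $q\le 2\max\{\zeta,\eta\}\le 2(\zeta+\eta)$ and the pointwise bound $\Psi(\zeta,\eta)\le k\,\phi_\beta(\zeta)\phi_\beta(\eta)$ from (H2), one estimates $q\,\Psi(\zeta,\eta)\le C(\zeta+\eta)\phi_\beta(\zeta)\phi_\beta(\eta)$ on the relevant set, which is integrable against $g(\cdot,s)g(\cdot,s)$ over $(0,\infty)^2$ because $g\in L^\infty(0,T;L^1_{-2\beta,1})$ — the worst contributions are $\zeta^{1-\beta}\eta^{-\beta}$-type terms paired against $\zeta^{-2\beta}+\zeta$ and $\zeta\eta$-type terms, all finite. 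Then the set $\{(\zeta,\eta): \zeta<q,\ q-\zeta<\eta<q\}$ shrinks to a null set as $q\to\infty$, so dominated convergence (in $\zeta,\eta$) kills the inner double integral pointwise in $s$, and a further dominated convergence in $s$ on $(0,t)$ finishes it. For the second term, $q\int_q^\infty\int_q^\infty \Psi g g$, one uses $q^2\le \zeta\eta$ on $\{\zeta>q,\eta>q\}$, hence $q\le \zeta\eta/q \le (\zeta+\eta)\min\{\zeta,\eta\}/q$; more simply $q\,\Psi(\zeta,\eta)\le q\,k(\zeta+\eta)\le k(\zeta+\eta)^2 \cdot \mathbf{1}$... here I would instead write $q \le \sqrt{\zeta\eta} \le \tfrac12(\zeta+\eta)$ so $q\,\Psi(\zeta,\eta)\le \tfrac{k}{2}(\zeta+\eta)^2$, but since $(\zeta+\eta)^2 g g$ need not be integrable I would rather keep one power of $q$ in the denominator: $q\,\Psi(\zeta,\eta) \le k\,q(\zeta+\eta) \le k\,(\zeta+\eta)\,\sqrt{\zeta\eta}\le k(\zeta+\eta)\max\{\zeta,\eta\}^{1/2}\min\{\zeta,\eta\}^{1/2}$, and then use that on $\{\zeta>q,\eta>q\}$ one has $\zeta,\eta>q\to\infty$ so the integral of $(\zeta+\eta)g(\zeta,s)g(\eta,s)$ over that region tends to $0$ by dominated convergence, with the extra factor $q/\sqrt{\zeta\eta}\le 1$ bounded. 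Either way the mechanism is: extract enough growth to dominate the factor $q$ by quantities integrable against $g\otimes g$ (guaranteed by the $L^1_{-2\beta,1}$ bound), then apply dominated convergence as the domains degenerate.

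The main obstacle is the estimate in $(ii)$ for the tail term $q\int_q^\infty\int_q^\infty \Psi g g\,ds$: one must find the right interpolation of the factor $q$ against the available moments. The only quantity known to be uniformly bounded in time is $\int_0^\infty(\zeta^{-2\beta}+\zeta)g(\zeta,s)\,d\zeta$, so the first moment $\int(\zeta+\eta)g g$ is integrable over $(0,\infty)^2$ but $\int(\zeta+\eta)^2 g g$ generally is not. Hence one cannot afford to bound $q$ by $\tfrac12(\zeta+\eta)$ on $\{\zeta,\eta>q\}$ and must instead keep a decaying factor like $q/\sqrt{\zeta\eta}\le 1$ (or $\zeta,\eta\ge q$ directly) so that the dominating function is exactly of first-moment type and vanishes as $q\to\infty$ by the absolute continuity of the integral. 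I expect the bracketing structure in $(ii)$ — subtracting the two double integrals before taking $q\to\infty$ rather than bounding each separately — is a convenience of presentation matching how the term will be used back in Lemma~\ref{lemma1}, but since I have argued each piece goes to $0$ individually, their difference does too.
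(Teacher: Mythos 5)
Your computation of $\tilde\omega$ for part $(i)$ is wrong, and the two cases are exactly interchanged. With $\omega = \chi_{[q,\infty)}$ one has, on a mixed region, say $\zeta\in(0,q)$ and $\eta\ge q$, that $\omega(\zeta+\eta)=1$, $\omega(\zeta)=0$, $\omega(\eta)=1$, hence $\tilde\omega=1-0-1=0$, not $-1$; and on $[q,\infty)^2$ one has $\tilde\omega=1-1-1=-1$, not $0$. Once this is corrected, the $+1$ region and the $-1$ region feed directly into the two terms of $(i)$ with the factor $\tfrac12$ from \eqref{wsce} already in place, and no further ``bookkeeping'' is needed — indeed, starting from your incorrect intermediate expression $-\int_0^t\int_0^q\int_q^\infty\Psi g g + \tfrac12\int_0^t\int_0^q\int_{q-\zeta}^q\Psi g g$, the algebraic identity $\int_0^q\int_q^\infty = \int_q^\infty\int_0^\infty - \int_q^\infty\int_q^\infty$ does \emph{not} reduce it to the stated right-hand side, so the proposed reorganization cannot rescue the miscomputed $\tilde\omega$.

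The proof you propose for part $(ii)$ also has a genuine gap, and in fact reflects a misreading of why the lemma is stated with the difference inside the bracket. You try to show each of $q\int_0^q\int_{q-\zeta}^q\Psi g g$ and $q\int_q^\infty\int_q^\infty\Psi g g$ tends to zero separately, the first via the dominating function $C(\zeta+\eta)\phi_\beta(\zeta)\phi_\beta(\eta)$. This function is \emph{not} integrable against $g(\cdot,s)\otimes g(\cdot,s)$: for $\zeta,\eta>1$ it behaves like $\zeta^2\eta + \zeta\eta^2$, i.e.\ it costs a second moment of $g$, and only the first moment is available from $L^\infty(0,T;L^1_{-2\beta,1})$. (The same objection applies if one bounds $q\le 2\max\{\zeta,\eta\}$: the resulting dominating function still has a $\max\{\zeta,\eta\}^2$ factor.) Consequently the first term cannot be shown to vanish on its own with the estimates available, and the closing remark that ``since each piece goes to $0$ individually, their difference does too'' is unsupported. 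The paper's route is structurally different and avoids this entirely: it multiplies the identity $(i)$ by $q$ and observes that the left-hand side $q\int_q^\infty[g-g^{in}]$ tends to zero because $\zeta g(\cdot,t)$ and $\zeta g^{in}$ are integrable (so $q\int_q^\infty|\cdots| \le \int_q^\infty\zeta[g+g^{in}]\to 0$). This gives $(ii)$ directly — the bracketing is essential, not cosmetic. Only the \emph{tail} term $q\int_q^\infty\int_q^\infty\Psi g g$ is estimated individually, in the next lemma, via the elementary bound $q\zeta+q\eta\le 2\zeta\eta$ valid on $\{\zeta>q,\eta>q\}$, which your sketch circles around ($q\le\sqrt{\zeta\eta}$ points in the right direction but your dominating function $\sqrt{\zeta\eta}(\zeta+\eta)$ again exceeds first-moment integrability); the vanishing of the other term $q\int_0^q\int_{q-\zeta}^q\Psi gg$ is then deduced by subtraction from $(ii)$, never proved directly.
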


\begin{proof}
Set  $\omega(\zeta) =\chi_{[q, \infty)}(\zeta) $ for $\zeta\in (0,\infty)$ and the corresponding $\tilde{ \omega}$ is
\begin{equation*}
\tilde{ \omega}(\zeta, \eta)=\begin{cases}
0,\ & \text{if}\ \zeta+\eta \in (0,q),\\
1,\ &  \text{if}\ \zeta+\eta \in [q,\infty),\ (\zeta,\eta)\in (0,q)^2,\\
0, \ &  \text{if}\ (\zeta,\eta) \in (0,q)\times [q,\infty),\\
0, \  &  \text{if}\ (\zeta,\eta) \in [q,\infty) \times (0,q),\\
-1,\ &  \text{if}\ (\zeta,\eta) \in [q,\infty)^2.\\
\end{cases}
\end{equation*}

Inserting the above values of $\tilde{ \omega}$ into (\ref{wsce}), we obtain Lemma~\ref{lemma2} $(i)$.\\

Next, we readily infer from the integrability of $\zeta\mapsto \zeta g(\zeta,t)$ and $\zeta\mapsto \zeta g^{in}(\zeta)$ and Lebesgue's  dominated convergence theorem that
\begin{equation*}
\lim_{q\to\infty} q \int_q^\infty [g(\zeta, t)-g^{in}(\zeta)]d\zeta \le \lim_{q\to\infty} \int_q^\infty \zeta [g(\zeta, t)+ g^{in}(\zeta)]d\zeta = 0.
\end{equation*}
Multiplying the identity stated in Lemma~\ref{lemma2}~$(i)$ by $q$, we deduce from the previous statement that the left-hand side of the thus obtained identity converges to zero as $q\to\infty$. Then so does its right-hand side, which proves Lemma~\ref{lemma2}~$(ii)$.
\end{proof}

\begin{lem}\label{lemma4}
Assume that $(H1)$--$(H2)$ hold. Let $g$ be a weak solution to (\ref{sce})--(\ref{1in1}) on $[0,T)$. Then, for $t\in (0,T)$,
\begin{align*}
(i)\lim_{q \to \infty}\int_0^t \int_0^q \int_{q}^{\infty} \zeta \Psi({\zeta},{\eta})g({\zeta},s)g({\eta},s)d{\eta}d{\zeta} ds=0,
\end{align*}
and
\begin{align*}
(ii)\lim_{q \to \infty} q \int_0^t \int_q^{\infty} \int_{q}^{\infty} \Psi({\zeta},{\eta})g({\zeta},s)g({\eta},s)d{\eta}d{\zeta}  ds=0.
\end{align*}
\end{lem}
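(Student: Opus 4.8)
The plan is to exploit the growth bounds in (H2) together with the a priori integrability $g\in L^\infty(0,T;L^1_{-2\beta,1}(0,\infty))$ and the estimate furnished by Lemma~\ref{lemma2}~$(ii)$. For part $(i)$, on the domain $\zeta\in(0,q)$, $\eta\in(q,\infty)$ with $q\geq 1$ we have $\eta>1$, so the relevant cases of (H2) give $\Psi(\zeta,\eta)\leq k\,\eta\,\zeta^{-\beta}$ when $\zeta<1$ and $\Psi(\zeta,\eta)\leq k(\zeta+\eta)\leq 2k\eta$ when $\zeta\geq1$; in both cases $\zeta\Psi(\zeta,\eta)\leq 2k\,\eta\,(\zeta^{-\beta}+\zeta)\,\chi_{\{\zeta\le1\}}\cdot\zeta + \dots$, more cleanly $\zeta\Psi(\zeta,\eta)\leq 2k\,\eta\,\max\{\zeta^{1-\beta},\zeta^2\}$. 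Since $\zeta\mapsto\zeta^{1-\beta}+\zeta$ need not be integrable against $g$, I would instead bound $\zeta\Psi(\zeta,\eta)\le k\,\eta\,(\zeta^{1-\beta}+\zeta^2)$ and note $\zeta^{1-\beta}\le \zeta^{-2\beta}+\zeta$ for $\zeta\le1$ while $\zeta^2\le \dots$ fails for large $\zeta$ — so I restrict to the true geometry: for $\zeta\ge1$, $\zeta\le q$ forces nothing useful, hence I split $\zeta\Psi\le 2k\eta\zeta$ on $\{\zeta\ge1\}$ and use instead that $\eta>q$ there. The clean route: write the integrand as $\big(\zeta g(\zeta,s)\big)\big(\eta g(\eta,s)\big)\cdot\frac{\Psi(\zeta,\eta)}{\eta}$ and observe $\Psi(\zeta,\eta)/\eta\le k\max\{\zeta^{-\beta},\zeta\}\le 2k(\zeta^{-2\beta}+\zeta)^{1/2}\cdot\dots$; more directly $\Psi(\zeta,\eta)\le k\eta\max\{\zeta^{-\beta},1\}\le k\eta(\zeta^{-\beta}+1)$ on all of $(0,\infty)\times(1,\infty)$ by (H2), so
\begin{equation*}
\int_0^q\int_q^\infty \zeta\Psi(\zeta,\eta)g(\zeta,s)g(\eta,s)\,d\eta\,d\zeta \le k\Big(\int_0^\infty(\zeta^{1-\beta}+\zeta)g(\zeta,s)\,d\zeta\Big)\Big(\int_q^\infty \eta g(\eta,s)\,d\eta\Big).
\end{equation*}
The first factor is bounded uniformly in $s\in(0,T)$ since $\zeta^{1-\beta}\le\zeta^{-2\beta}+\zeta$ (check: for $\zeta\le1$, $\zeta^{1-\beta}\le\zeta^{-2\beta}$ iff $1-\beta\ge-2\beta$ iff $\beta\ge-1$, true; for $\zeta\ge1$, $\zeta^{1-\beta}\le\zeta$, true), and the second factor tends to $0$ as $q\to\infty$ by dominated convergence, uniformly enough after integrating in $s$ by a further dominated-convergence argument on $[0,t]$ (the $s$-integrand is dominated by a constant). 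This gives $(i)$.

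For part $(ii)$, I would not estimate the double integral over $(q,\infty)^2$ directly — that is precisely the term controlled by Lemma~\ref{lemma2}~$(ii)$. Rewrite the conclusion of Lemma~\ref{lemma2}~$(ii)$ as
\begin{equation*}
\lim_{q\to\infty}\int_0^t q\int_q^\infty\int_q^\infty\Psi(\zeta,\eta)g(\zeta,s)g(\eta,s)\,d\eta\,d\zeta\,ds = \lim_{q\to\infty}\int_0^t q\int_0^q\int_{q-\zeta}^q\Psi(\zeta,\eta)g(\zeta,s)g(\eta,s)\,d\eta\,d\zeta\,ds,
\end{equation*}
so it suffices to show the right-hand limit is $0$. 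On $\{\zeta<q,\ q-\zeta<\eta<q\}$ one has $\eta>q-\zeta$, hence $q\le\zeta+\eta<2\eta$ and also $q<\zeta+\eta\le 2\max\{\zeta,\eta\}$, so $q\le 2\eta$ and by symmetry $q\le2\zeta$ is false in general — but $q\le\zeta+\eta$ always, giving $q\,\Psi(\zeta,\eta)\le(\zeta+\eta)\Psi(\zeta,\eta)$. Now use (H2) to bound $(\zeta+\eta)\Psi(\zeta,\eta)$: splitting into the three regions of (H2), one checks $(\zeta+\eta)\Psi(\zeta,\eta)\le C(\zeta^{-\beta}+\zeta)(\eta^{-\beta}+\eta)$ pointwise (the borderline case is $\zeta,\eta<1$, where $(\zeta+\eta)(\zeta\eta)^{-\beta}\le 2\zeta^{-\beta}\eta^{-\beta}$ up to a constant only if... actually $(\zeta+\eta)(\zeta\eta)^{-\beta}\le \zeta^{1-\beta}\eta^{-\beta}+\zeta^{-\beta}\eta^{1-\beta}\le \zeta^{-2\beta+\dots}$ — here $\zeta^{1-\beta}\le\zeta^{-\beta}$ for $\zeta\le1$ requires $\beta\ge\dots$ no: $1-\beta\ge-\beta$ always, wrong direction, but $\zeta^{1-\beta}$ with $\zeta\le1$: if $\beta<1$ then $\zeta^{1-\beta}\le1$, so $(\zeta+\eta)(\zeta\eta)^{-\beta}\le 2(\zeta\eta)^{-\beta}$; if $\beta\ge1$ use $\zeta^{1-\beta}\le\zeta^{-2\beta+1}\cdot\zeta^{\beta-1}\le\zeta^{-\beta}$... ). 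In any case $(\zeta+\eta)\Psi(\zeta,\eta)\le C\phi_\beta(\zeta)\phi_\beta(\eta)$ where I would verify $\phi_\beta(\zeta)^{1}\le \zeta^{-2\beta}+\zeta$ only fails for the square; here just one power of $\phi_\beta$ per variable is fine since $\phi_\beta(\zeta)=\max\{\zeta^{-\beta},\zeta\}\le\zeta^{-2\beta}+\zeta$ — wait, $\zeta^{-\beta}\le\zeta^{-2\beta}$ for $\zeta\le1$ iff $\beta\ge0$, true. So
\begin{equation*}
q\int_0^q\int_{q-\zeta}^q\Psi\, g g\,d\eta\,d\zeta \le C\int_0^q\int_{q-\zeta}^q \phi_\beta(\zeta)g(\zeta,s)\phi_\beta(\eta)g(\eta,s)\,d\eta\,d\zeta,
\end{equation*}
which is integrable, and on the shrinking-in-relative-measure domain $\{\eta>q-\zeta,\ \eta<q,\ \zeta<q\}$ the integral tends to $0$ as $q\to\infty$ (for fixed $\zeta$, the inner $\eta$-interval $(q-\zeta,q)$ recedes to infinity) by dominated convergence; integrating in $s$ over $[0,t]$ and applying dominated convergence once more gives $(ii)$.

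The main obstacle, I expect, is bookkeeping the three-region structure of (H2) cleanly enough that every pointwise bound reduces to a weight that is actually controlled by the $L^1_{-2\beta,1}$-norm — in particular making sure one never needs two powers of $\phi_\beta$ against a single copy of $g$, and that the $s$-integration is handled by a genuine domination (uniform in $q$) so that Fubini and dominated convergence apply. The verification that $\phi_\beta(\zeta)\le\zeta^{-2\beta}+\zeta$ and $(\zeta+\eta)\Psi(\zeta,\eta)\le C\phi_\beta(\zeta)\phi_\beta(\eta)$, while elementary, is where the hypothesis $\beta>0$ and the precise form of the weight $\zeta^{-2\beta}+\zeta$ in Definition~\ref{definition} are genuinely used, and I would carry out those inequalities carefully region by region before invoking the convergence theorems.
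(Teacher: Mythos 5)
Your proof of part $(i)$ is essentially the paper's argument (modulo a harmless factor-of-2 slip: your claimed bound $\Psi(\zeta,\eta)\le k\eta(\zeta^{-\beta}+1)$ on all of $(0,\infty)\times(1,\infty)$ fails for $\zeta\ge\eta$, but on the domain of integration one has $\zeta<q<\eta$ so $\zeta+\eta\le 2\eta$ and everything goes through). The paper's proof splits $\zeta\in(0,1)$ from $\zeta\in(1,q)$ rather than using a unified bound, but the two are the same in substance: absorb the factor $\zeta$ into a weight controlled by the $L^1_{-2\beta,1}$-norm, isolate $\int_q^\infty\eta g\,d\eta$, and conclude by two applications of dominated convergence.

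Part $(ii)$ is where the plan breaks down. You propose to prove Lemma~\ref{lemma4}~$(ii)$ by first showing
\begin{equation*}
\lim_{q\to\infty} q\int_0^t\int_0^q\int_{q-\zeta}^q\Psi(\zeta,\eta)g(\zeta,s)g(\eta,s)\,d\eta\,d\zeta\,ds = 0
\end{equation*}
directly (via the pointwise bound $q\le\zeta+\eta$ on that region) and then invoking Lemma~\ref{lemma2}~$(ii)$ to transfer the limit to the integral over $(q,\infty)^2$. The logical structure is fine, but the step you need — namely $(\zeta+\eta)\Psi(\zeta,\eta)\le C\,\phi_\beta(\zeta)\phi_\beta(\eta)$ — is simply false. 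Already for $\zeta,\eta>1$, $(H2)$ gives only $\Psi\le k(\zeta+\eta)$, and so $(\zeta+\eta)\Psi$ can be as large as $k(\zeta+\eta)^2$, which is not $\lesssim\zeta\eta$: take $\eta$ of order $q$ and $\zeta$ of order $1$ (a configuration that does lie in $\{\zeta<q,\ q-\zeta<\eta<q\}$) and the left side is of order $q^2$ while the right side is of order $q$. The same obstruction shows up for $\zeta<1$, $\eta$ near $q$: $(\zeta+\eta)\Psi\approx k\eta^2\zeta^{-\beta}$, and $\eta^2 g(\eta,s)$ is not controlled. In other words, any attempt to prove the near-diagonal limit by a pointwise estimate against the available moments implicitly asks for a finite second moment, which is exactly the hypothesis this paper is designed to remove; that term can only be shown to vanish after the fact, as a consequence of Lemma~\ref{lemma2}~$(ii)$ together with Lemma~\ref{lemma4}~$(ii)$.

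The paper sidesteps this entirely by estimating the integral over $(q,\infty)^2$ directly. The point is that there \emph{both} variables exceed $q$, so for $q\ge 1$ one has $\Psi\le k(\zeta+\eta)$ and then $q(\zeta+\eta)=q\zeta+q\eta\le \eta\zeta+\zeta\eta=2\zeta\eta$; the offending factor of $q$ is absorbed by whichever variable is not already paired with $\zeta$ or $\eta$. This yields
\begin{equation*}
q\int_q^\infty\int_q^\infty\Psi\,g\,g\,d\eta\,d\zeta\le 2k\,\mathcal{M}_1(g)(s)\int_q^\infty\eta\,g(\eta,s)\,d\eta,
\end{equation*}
and dominated convergence finishes as in part $(i)$. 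You should reverse the direction of your deduction: prove $(ii)$ directly this way, and then \eqref{mce12} — the near-diagonal statement you tried to prove first — follows from Lemma~\ref{lemma2}~$(ii)$, exactly as in the proof of Theorem~\ref{themcs}.
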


\begin{proof}
Let $q>1$, $t\in (0,T)$, and $s\in (0,t)$. To prove the first part of Lemma~\ref{lemma4}, we split the integral as follows
\begin{equation*}
\int_0^q \int_{q}^{\infty} \zeta \Psi({\zeta},{\eta}) g({\zeta},s) g({\eta},s) d{\eta} d{\zeta} = J_1(q,s) + J_2(q,s),
\end{equation*}
with
\begin{align*}
J_1(q,s) & := \int_0^1 \int_{q}^{\infty} \zeta \Psi({\zeta},{\eta}) g({\zeta},s) g({\eta},s) d{\eta} d{\zeta}, \\
J_2(q,s) & := \int_1^q \int_{q}^{\infty} \zeta \Psi({\zeta},{\eta}) g({\zeta},s) g({\eta},s) d{\eta} d{\zeta}.
\end{align*}
On the one hand, it follows from $(H2)$ and Young's inequality that
\begin{align*}
J_1(q,s) & \le k \int_0^1 \int_{q}^{\infty} \zeta^{1-\beta} \eta g({\zeta},s) g({\eta},s) d{\eta} d{\zeta} \\
& \le k \left( \int_0^\infty \zeta^{1-\beta} g(\zeta,s) d\zeta \right) \left( \int_q^\infty \eta g(\eta,s) d\eta \right) \\
& \le k \|g(s)\|_{L_{-2\beta,1}^1(0,\infty)} \int_q^\infty \eta g(\eta,s) d\eta
\end{align*}
and the integrability properties of $g$ from Definition~\ref{definition} and Lebesgue's dominated convergence theorem entail that
\begin{equation}
\lim_{q \to \infty}\int_0^t J_1(q,s) ds = 0. \label{PhLz1}
\end{equation}
On the other hand, we infer from (H2) that
\begin{align*}
J_2(q,s) & \le k \int_1^q \int_{q}^{\infty} \zeta (\zeta+\eta) g({\zeta},s) g({\eta},s) d{\eta} d{\zeta} \\
& \le 2k \int_1^q \int_{q}^{\infty} \zeta \eta g({\zeta},s) g({\eta},s) d{\eta} d{\zeta} \\
& \le 2k \mathcal{M}_1(g)(s) \int_q^\infty \eta g(\eta,s) d\eta,
\end{align*}
and we argue as above to conclude that
\begin{equation*}
\lim_{q \to \infty}\int_0^t J_2(q,s) ds = 0.
\end{equation*}
Recalling \eqref{PhLz1}, we have proved Lemma~\ref{lemma4}~$(i)$.\\

Similarly, by $(H2)$,
\begin{align*}
q \int_q^{\infty} \int_{q}^{\infty} \Psi({\zeta},{\eta}) g({\zeta},s) g({\eta},s) d{\eta} d{\zeta} & \le k \int_q^\infty \int_q^\infty (q\zeta+q\eta) g({\zeta},s) g({\eta},s) d{\eta} d{\zeta} \\
& \le 2k \int_q^\infty \int_{q}^{\infty} \zeta \eta g({\zeta},s) g({\eta},s) d{\eta} d{\zeta} \\
& \le 2k \mathcal{M}_1(g)(s)\int_q^\infty \eta g(\eta,s) d\eta,
\end{align*}
and we use once more the previous argument to obtain Lemma~\ref{lemma4}~$(ii)$.
\end{proof}

Now, we are in a position to prove Theorem~\ref{themcs}.

\begin{proof}[Proof of Theorem~\ref{themcs}] Let $t\in (0,T)$. From Lemma~\ref{lemma4}~$(i)$, we obtain
\begin{equation}\label{mce10}
\lim_{q \to \infty}\int_0^t \int_0^q \int_{q}^{\infty}\zeta \Psi({\zeta},{\eta})g({\zeta},s)g({\eta},s)d{\eta}d{\zeta} ds=0,
\end{equation}
while Lemma~\ref{lemma2}~$(ii)$ and Lemma~\ref{lemma4}~$(ii)$ imply that
\begin{equation}\label{mce12}
\lim_{q \to \infty} q \int_0^t \int_0^{q} \int_{q-\zeta}^{q} \Psi({\zeta},{\eta}) g({\zeta},s) g({\eta},s) d{\eta} d{\zeta} ds=0.
\end{equation}
Since
\begin{align*}
\int_0^t \int_0^{q} \int_{q-\zeta}^{\infty} \zeta \Psi({\zeta},{\eta}) g({\zeta},s) g({\eta},s) d{\eta} d{\zeta} ds & \le q \int_0^t \int_0^{q} \int_{q-\zeta}^{q} \Psi({\zeta},{\eta}) g({\zeta},s) g({\eta},s) d{\eta} d{\zeta} ds \\
& + \int_0^t \int_0^{q} \int_{q}^{\infty} \zeta \Psi({\zeta},{\eta}) g({\zeta},s) g({\eta},s) d{\eta} d{\zeta} ds,
\end{align*}
it readily follows from \eqref{mce10} and \eqref{mce12} that the right-hand side of \eqref{lemma11} converges to zero as $q\to\infty$. Consequently,
\begin{equation*}
\mathcal{M}_1(g)(t) = \lim_{q\to\infty} \int_0^q \zeta g(\zeta,s) d\zeta = \lim_{q\to\infty} \int_0^q \zeta g^{in}(\zeta) d\zeta = \mathcal{M}_1(g^{in}).
\end{equation*}
This completes the proof of Theorem~\ref{themcs}.
\end{proof}

\section{Existence of weak solutions}

This section is devoted to the construction of weak solutions to the SCE \eqref{sce}--\eqref{1in1} with a non-negative initial condition $g^{in}\in L_{-2\beta,1}^1(0,\infty)$. It is achieved by a classical compactness technique, the appropriate functional setting being here the space $L^1(0,\infty)$ endowed with its weak topology first used in the seminal work \cite{Stewart:1989} and subsequently further developed in \cite{Barik:2017, Camejo:2015I, Camejo:2015II, Escobedo:2003, Filbet:2004I, Giri:2012, Laurencot:2002L}.

Given a non-negative initial condition $g^{in}\in L_{-2\beta,1}^1(0,\infty)$, the starting point of this approach is the choice of an approximation of the SCE \eqref{sce}--\eqref{1in1}, which we set here to be
\begin{equation}\label{tncfe}
\frac{\partial g_n({\zeta},t)} {\partial t}  = \mathcal{B}_c(g_n)(\zeta,t) - \mathcal{D}_{c,n}^{\theta}(g_n)(\zeta,t),\ \qquad (\zeta,t)\in (0,n)\times (0, \infty),
\end{equation}
with truncated initial condition
\begin{equation}\label{nctp1a}
g_n(\zeta,0) = g_n^{in}({\zeta}):=g^{in}({\zeta})\chi_{(0,n)}({\zeta}), \qquad \zeta\in (0,n),
\end{equation}
where $n\ge 1$ is a positive integer, $\theta\in \{0,1\}$,
\begin{equation}
\Psi_n^\theta(\zeta,\eta) := \Psi(\zeta,\eta) \chi_{(1/n,n)}(\zeta) \chi_{(1/n,n)}(\eta) \left[ 1 - \theta + \theta \chi_{(0,n)}(\zeta+\eta) \right] \label{nctp1b}
\end{equation}
for $(\zeta,\eta)\in (0,\infty)^2$ and
\begin{equation}\label{Nonconservativedeath}
\mathcal{D}_{c,n}^{\theta}(g)(\zeta) := \int_{0}^{n-\theta\zeta} \Psi_n^\theta({\zeta},{\eta}) g({\zeta}) g ({\eta}) d{\eta},\ \qquad \zeta\in (0,n),\
\end{equation}
the gain term $\mathcal{B}_c(g)(\zeta)$ being still defined by \eqref{Birthterm} for $\zeta\in (0,n)$. The introduction of the additional parameter $\theta\in\{0,1\}$ allows us to handle simultaneously the so-called conservative approximation ($\theta=1$) and non-conservative approximation ($\theta=0$) and thereby prove that both approximations allow us to construct weak solutions to the SCE \eqref{sce}--\eqref{1in1}, a feature which is of interest when no general uniqueness result is available. Note that we also truncate the coagulation for small volumes to guarantee the boundedness of $\Psi_n^\theta$ which is a straightforward consequence of $(H2)$ and \eqref{nctp1b}. Thanks to this property, it follows from \cite{Stewart:1989} ($\theta=1$) and \cite{Filbet:2004I} ($\theta=0$) that there is a unique non-negative solution $g_n\in \mathcal{C}^1([0,\infty);L^1(0,n))$ to \eqref{tncfe}--\eqref{nctp1a} (we do not indicate the dependence upon $\theta$ for notational simplicity) which satisfies
\begin{equation}
\int_0^n{ {\zeta}g_n({\zeta},t)}d{\zeta} =\int_0^n{ {\zeta}g^{in}_n({\zeta})}d{\zeta} - (1-\theta) \int_0^t \int_0^n \int_{n-{\zeta}}^n {\zeta}\Psi_n^\theta(\zeta,\eta)g_n({\zeta},s)g_n({\eta},s)d\eta d{\zeta}ds \label{PhLz2}
\end{equation}
for $t\ge 0$. The second term in the right-hand side of \eqref{PhLz2} vanishes for $\theta=1$ and the total mass of $g_n$ remains constant throughout time evolution, which is the reason for this approximation to be called conservative. In contrast, when $\theta=0$, the total mass of $g_n$ decreases as a function of time. In both cases, it readily follows from \eqref{PhLz2} that
\begin{equation}\label{masstn}
\int_0^n{ {\zeta}g_n({\zeta},t)}d{\zeta} \leq \int_0^n{ {\zeta}g_n^{in}({\zeta})}d{\zeta} \le \mathcal{M}_1(g^{in}), \qquad t\ge 0.
\end{equation}

For further use, we next state the weak formulation of (\ref{tncfe})--(\ref{nctp1a}): for $t>0$ and $\omega \in L^{\infty}(0, n)$, there holds
\begin{equation}\label{nctp3}
\int_0^n \omega({\zeta})[g_n({\zeta},t)-g_n^{in}({\zeta})]d{\zeta}= \frac{1}{2} \int_0^t \int_{1/n}^n\int_{1/n}^{n} H_{\omega,n}^\theta({\zeta},{\eta})\Psi_n^{\theta}(\zeta,\eta) g_n({\zeta},s)g_n({\eta},s)d\eta d{\zeta}ds,
\end{equation}
where
\begin{equation*}
H_{\omega,n}^\theta({\zeta},{\eta}) := \omega ({\zeta}+{\eta})\chi_{(0,n)}({\zeta}+{\eta})- [\omega({\zeta})+\omega({\eta})] \left(1-\theta + \theta \chi_{(0,n)}(\zeta+\eta) \right)
\end{equation*}
for $(\zeta,\eta)\in (0,n)^2$.

In order to prove Theorem~\ref{thm1}, we shall show the convergence (with respect to an appropriate topology) of a subsequence of $(g_n)_{n\ge 1}$ towards a weak solution to \eqref{sce}--\eqref{1in1}. For that purpose, we now derive several estimates and first recall that, since $g^{in}\in L^1_{-2\beta, 1}(0, \infty)$, a refined version of de la Vall\'{e}e-Poussin theorem, see \cite{Le:1977} or \cite[Theorem~8]{Laurencot:2015}, guarantees that there exist two non-negative and convex functions $\sigma_1$ and $\sigma_2$ in $\mathcal{C}^2([0,\infty))$ such that $\sigma_1'$ and $\sigma_2'$ are concave,
\begin{equation}\label{convex1}
\sigma_i(0)=\sigma_i'(0)= 0,\ \ \lim_{x \to {\infty}}\frac{\sigma_i(x)}{x}=\infty,\ \ \ i=1,2,
\end{equation}
and
\begin{equation}\label{convex2}
\mathcal{I}_1 := \int_0^{\infty}\sigma_1({\zeta}) g^{in}({\zeta})d{\zeta}<\infty,\ \ \text{and}\ \ \mathcal{I}_2 := \int_0^{\infty}{\sigma_2\left( {\zeta}^{-\beta}g^{in}({\zeta}) \right)}d{\zeta}<\infty.
\end{equation}
Let us state the following properties of the above defined functions $\sigma_1$ and $\sigma_2$ which are required to prove Theorem~\ref{thm1}.

\begin{lem}\label{lemmaconvex} For $(x,y)\in (0,\infty)^2$, there holds
\begin{eqnarray*}
\hspace{-5cm}(i)~~~\sigma_2(x)\leq x\sigma_2'(x)\leq 2\sigma_2(x),\
\end{eqnarray*}
\begin{eqnarray*}
\hspace{-5cm}(ii)~~ x\sigma_2'(y)\leq \sigma_2(x)+  \sigma_2(y),\
\end{eqnarray*}
and
\begin{eqnarray*}
(iii)~0\leq \sigma_1(x+y)-\sigma_1(x)-\sigma_1(y)\leq 2\frac{x\sigma_1(y)+y\sigma_1(x)}{x+y}.
\end{eqnarray*}
\end{lem}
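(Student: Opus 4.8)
The plan is to exploit the structural properties coming from the refined de la Vall\'ee--Poussin theorem: both $\sigma_1,\sigma_2$ are non-negative, convex, belong to $\mathcal{C}^2([0,\infty))$, vanish together with their first derivative at $0$, and have \emph{concave} derivatives. For part $(i)$, I would start from convexity of $\sigma_2$ together with $\sigma_2(0)=0$: the function $x\mapsto \sigma_2(x)/x$ is non-decreasing on $(0,\infty)$, which upon writing $\sigma_2(x)=\int_0^x \sigma_2'(r)\,dr$ and using that $\sigma_2'$ is non-decreasing gives $\sigma_2(x)\le x\sigma_2'(x)$. For the upper bound $x\sigma_2'(x)\le 2\sigma_2(x)$, I would use that $\sigma_2'$ is concave with $\sigma_2'(0)=0$, so $r\mapsto \sigma_2'(r)/r$ is non-increasing; hence for $r\in(0,x)$ one has $\sigma_2'(r)\ge (r/x)\sigma_2'(x)$, and integrating over $r\in(0,x)$ yields $\sigma_2(x)\ge \tfrac12 x\sigma_2'(x)$. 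This is the cleanest route and I expect it to be essentially routine.

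For part $(ii)$, the inequality $x\sigma_2'(y)\le \sigma_2(x)+\sigma_2(y)$ splits into two cases according to whether $x\le y$ or $x>y$. If $x\le y$, monotonicity of $\sigma_2'$ gives $x\sigma_2'(y)\le y\sigma_2'(y)\le 2\sigma_2(y)$ is too crude; instead one uses $x\sigma_2'(y)\le x\sigma_2'(x)+\big(x\sigma_2'(y)-x\sigma_2'(x)\big)$, but the sharper and simpler argument is: by Young's inequality for the pair $(\sigma_2,\sigma_2^*)$ with $\sigma_2^*$ the convex conjugate, $x\sigma_2'(y)\le \sigma_2(x)+\sigma_2^*(\sigma_2'(y))=\sigma_2(x)+y\sigma_2'(y)-\sigma_2(y)$, and then part $(i)$ gives $y\sigma_2'(y)-\sigma_2(y)\le \sigma_2(y)$, so $x\sigma_2'(y)\le \sigma_2(x)+\sigma_2(y)$. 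Here $\sigma_2^*(\sigma_2'(y))=y\sigma_2'(y)-\sigma_2(y)$ is the standard Legendre identity, valid since $\sigma_2\in\mathcal{C}^2$ is convex. This case-free argument via Young's inequality is the one I would write down.

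For part $(iii)$, the lower bound $\sigma_1(x+y)-\sigma_1(x)-\sigma_1(y)\ge 0$ is immediate from superadditivity of convex functions vanishing at $0$ (write $\sigma_1(x)=\sigma_1\big(\tfrac{x}{x+y}(x+y)+\tfrac{y}{x+y}\cdot 0\big)\le \tfrac{x}{x+y}\sigma_1(x+y)$ and similarly for $y$, then add). For the upper bound I would write, by the fundamental theorem of calculus,
\begin{equation*}
\sigma_1(x+y)-\sigma_1(x)-\sigma_1(y) = \int_0^y \big[\sigma_1'(x+r)-\sigma_1'(r)\big]\,dr,
\end{equation*}
and since $\sigma_1'$ is concave with $\sigma_1'(0)=0$, the increment $\sigma_1'(x+r)-\sigma_1'(r)$ is non-increasing in $r$, hence bounded by its value at $r=0$, namely $\sigma_1'(x)$; this gives the bound $\le y\sigma_1'(x)$, and symmetrically $\le x\sigma_1'(y)$, so the left-hand side is at most $\tfrac{x}{x+y}\cdot y\sigma_1'(x)+\tfrac{y}{x+y}\cdot x\sigma_1'(y)=\tfrac{xy}{x+y}\big(\sigma_1'(x)+\sigma_1'(y)\big)$. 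Finally, applying the analogue of part $(i)$ to $\sigma_1$ (same proof, using concavity of $\sigma_1'$ and $\sigma_1'(0)=0$) yields $\sigma_1'(x)\le \sigma_1(x)/x\cdot 2$ — more precisely $x\sigma_1'(x)\le 2\sigma_1(x)$ — so $\tfrac{xy}{x+y}\sigma_1'(x)\le \tfrac{2y\sigma_1(x)}{x+y}$ and likewise for the other term, producing exactly the claimed bound $2\frac{x\sigma_1(y)+y\sigma_1(x)}{x+y}$. The only mildly delicate point, and the one I would be most careful about, is justifying that the increment $r\mapsto\sigma_1'(x+r)-\sigma_1'(r)$ is non-increasing: this follows because $\sigma_1'$ is concave, so its derivative $\sigma_1''$ is non-increasing, whence $\tfrac{d}{dr}\big[\sigma_1'(x+r)-\sigma_1'(r)\big]=\sigma_1''(x+r)-\sigma_1''(r)\le 0$. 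No step requires anything beyond the stated regularity of $\sigma_1,\sigma_2$, so I expect no serious obstacle.
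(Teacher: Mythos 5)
Your proof is correct. Note that the paper itself does not actually write out a proof of this lemma: it cites \cite[Proposition~14]{Laurencot:2015} for $(i)$ and $(iii)$ and says $(ii)$ \enquote{can easily be deduced from $(i)$ and the convexity of $\sigma_2$}. Your arguments are the standard ones and match that hint. A few remarks on alignment: for $(ii)$, your Legendre/Young-inequality route is really just a rephrasing of the tangent-line inequality $\sigma_2(x)\ge\sigma_2(y)+\sigma_2'(y)(x-y)$, i.e.\ $x\sigma_2'(y)\le\sigma_2(x)-\sigma_2(y)+y\sigma_2'(y)$, followed by $(i)$ to bound $y\sigma_2'(y)\le 2\sigma_2(y)$; this is exactly the argument the paper alludes to, and invoking the convex conjugate is a slight over-formalization but harmless. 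For $(i)$, the two halves (convexity of $\sigma_2$ with $\sigma_2(0)=0$ giving $\sigma_2(x)\le x\sigma_2'(x)$; concavity of $\sigma_2'$ with $\sigma_2'(0)=0$ giving $\sigma_2'(r)\ge (r/x)\sigma_2'(x)$ on $(0,x)$, hence $\sigma_2(x)\ge\tfrac12 x\sigma_2'(x)$) are precisely the ones used in the cited reference. For $(iii)$, writing the left side as $\int_0^y[\sigma_1'(x+r)-\sigma_1'(r)]\,dr$ with a non-increasing integrand (since $\sigma_1''$ is non-increasing) to get the two bounds $y\sigma_1'(x)$ and $x\sigma_1'(y)$, averaging with weights $x/(x+y)$ and $y/(x+y)$, and finally replacing $x\sigma_1'(x)$ by $2\sigma_1(x)$ via $(i)$ applied to $\sigma_1$, is again the standard argument; the verification that the integrand is non-increasing is correctly justified from $\sigma_1\in\mathcal{C}^2$ and concavity of $\sigma_1'$. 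No gaps.
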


\begin{proof}
A proof of the statements $(i)$ and $(iii)$ may be found in \cite[Proposition~14]{Laurencot:2015} while $(ii)$ can easily be deduced from $(i)$ and the convexity of $\sigma_2$.
\end{proof}

We recall that throughout this section, the coagulation kernel $\Psi$ is assumed to satisfy $(H1)$--$(H2)$ and $g^{in}$ is a non-negative function in $L^1_{-2\beta, 1}(0,\infty )$.

\subsection{Moment estimates}

We begin with a uniform bound in $L_{-2\beta,1}^1(0,\infty)$.

\begin{lem}\label{LemmaEquibound}
There exists a positive constant $\mathcal{B}>0$ depending only on $g^{in}$ such that, for $t\ge 0$,
\begin{equation*}
\int_0^n \left( {\zeta}+{\zeta}^{-2\beta} \right) g_n(\zeta,t)d{\zeta} \leq \mathcal{B}.
\end{equation*}
\end{lem}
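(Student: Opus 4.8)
The plan is to notice first that the bound on the first moment, $\int_0^n \zeta\, g_n(\zeta,t)\,d\zeta \le \mathcal{M}_1(g^{in})$, is already provided by \eqref{masstn}, so the only new ingredient needed is a uniform-in-time control of the negative moment $\int_0^n \zeta^{-2\beta} g_n(\zeta,t)\,d\zeta$. The key observation is that coagulation can only \emph{decrease} this quantity: since $2\beta>0$, the map $\zeta\mapsto\zeta^{-2\beta}$ is non-increasing, whence $(\zeta+\eta)^{-2\beta}\le\min\{\zeta^{-2\beta},\eta^{-2\beta}\}\le\zeta^{-2\beta}+\eta^{-2\beta}$ for all $(\zeta,\eta)\in(0,\infty)^2$. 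Informally, this says that the right-hand side of the weak formulation \eqref{nctp3} with $\omega(\zeta)=\zeta^{-2\beta}$ is nonpositive.

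To turn this into a rigorous argument with an admissible (bounded) test function, I would truncate: for an integer $j\ge1$ set $\omega_j(\zeta):=\min\{\zeta^{-2\beta},j\}$, which belongs to $L^\infty(0,n)$. From the monotonicity of $\zeta\mapsto\zeta^{-2\beta}$ and the non-negativity of $\omega_j$ one gets $\omega_j(\zeta+\eta)\le\omega_j(\zeta)\le\omega_j(\zeta)+\omega_j(\eta)$, hence $\tilde\omega_j(\zeta,\eta)=\omega_j(\zeta+\eta)-\omega_j(\zeta)-\omega_j(\eta)\le0$ for all $(\zeta,\eta)$. Consequently, the kernel $H_{\omega_j,n}^\theta$ appearing in \eqref{nctp3} is also $\le0$ in both cases $\theta\in\{0,1\}$: for $\theta=1$ it equals $\chi_{(0,n)}(\zeta+\eta)\,\tilde\omega_j(\zeta,\eta)\le0$, and for $\theta=0$ it is bounded above by $\tilde\omega_j(\zeta,\eta)\le0$ since $\omega_j(\zeta+\eta)\chi_{(0,n)}(\zeta+\eta)\le\omega_j(\zeta+\eta)$.

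Plugging $\omega=\omega_j$ into \eqref{nctp3} and using $\Psi_n^\theta\ge0$ and $g_n\ge0$ then yields, for every $t\ge0$ and $j\ge1$, the inequality $\int_0^n\omega_j(\zeta)g_n(\zeta,t)\,d\zeta\le\int_0^n\omega_j(\zeta)g_n^{in}(\zeta)\,d\zeta\le\int_0^\infty\zeta^{-2\beta}g^{in}(\zeta)\,d\zeta$. Letting $j\to\infty$ and invoking the monotone convergence theorem gives $\int_0^n\zeta^{-2\beta}g_n(\zeta,t)\,d\zeta\le\int_0^\infty\zeta^{-2\beta}g^{in}(\zeta)\,d\zeta$. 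Combining this with \eqref{masstn} furnishes the asserted bound with $\mathcal{B}:=\|g^{in}\|_{L^1_{-2\beta,1}(0,\infty)}$, which depends only on $g^{in}$.

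There is no serious obstacle here; the estimate is essentially a sign/monotonicity argument. The only points requiring care are (i) that $\zeta\mapsto\zeta^{-2\beta}$ is singular at the origin and hence not in $L^\infty(0,n)$, which forces the truncation $\omega_j$ and the passage $j\to\infty$, and (ii) checking that the $\theta$-dependent correction terms in $H_{\omega,n}^\theta$ do not spoil the sign, which is immediate once the subadditivity-type inequality $\omega_j(\zeta+\eta)\le\omega_j(\zeta)+\omega_j(\eta)$ is available. An alternative but equivalent route would be to differentiate $t\mapsto\int_0^n\zeta^{-2\beta}g_n(\zeta,t)\,d\zeta$ directly from \eqref{tncfe}, discard the nonpositive contribution of the death term, and handle the gain term after the change of variables $(\zeta,\eta)\mapsto(\zeta-\eta,\eta)$; the weak-formulation argument above is, however, shorter and avoids any separate integrability discussion of the individual terms.
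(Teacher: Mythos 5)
Your argument is correct and follows essentially the same route as the paper: both proofs test the weak formulation \eqref{nctp3} against a bounded regularization of $\zeta\mapsto\zeta^{-2\beta}$, observe that $H_{\omega,n}^\theta\le 0$ because this function is non-increasing and non-negative, and then pass to the limit to recover the negative moment bound before invoking \eqref{masstn}. The only cosmetic difference is the choice of regularization and limiting tool — you use the cap $\min\{\zeta^{-2\beta},j\}$ with monotone convergence, whereas the paper uses the shift $(\zeta+\delta)^{-2\beta}$ with Fatou's lemma — which are interchangeable here.
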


\begin{proof} Let $\delta\in (0,1)$ and take $\omega(\zeta)=(\zeta+\delta)^{-2\beta}$, $\zeta\in (0,n)$, in \eqref{nctp3}. With this choice of $\omega$,
\begin{equation*}
H_{\omega,n}^\theta(\zeta,\eta) \le \left[ (\zeta+\eta+\delta)^{-2\beta} - (\zeta+\delta)^{-2\beta} - (\eta+\delta)^{-2\beta} \right] \chi_{(0,n)}(\zeta+\eta) \le 0
\end{equation*}
for all $(\zeta,\eta)\in (0,n)^2$, so that \eqref{nctp3} entails that, for $t\ge 0$,
\begin{equation*}
\int_0^n (\zeta+\delta)^{-2\beta} g_n(\zeta,t) d\zeta \le \int_0^n (\zeta+\delta)^{-2\beta} g_n^{in}(\zeta) d\zeta \le \int_0^\infty \zeta^{-2\beta} g^{in}(\zeta) d\zeta.
\end{equation*}
We then let $\delta\to 0$ in the previous inequality and deduce from Fatou's lemma that
\begin{equation*}
\int_0^n \zeta^{-2\beta} g_n(\zeta,t) d\zeta \le \int_0^\infty \zeta^{-2\beta} g^{in}(\zeta) d\zeta,\ \qquad t\ge 0.
\end{equation*}
Combining the previous estimate with \eqref{masstn} gives Lemma~\ref{LemmaEquibound} with $\mathcal{B} := \|g^{in}\|_{L^1_{-2\beta, 1}(0,\infty)}$.
\end{proof}

We next turn to the control of the tail behavior of $g_n$ for large volumes, a step which is instrumental in the proof of the convergence of each integral on the right-hand side of \eqref{tncfe} to their respective limits on the right-hand side of \eqref{sce}.

\begin{lem}\label{massconservation}
For $T>0$, there is a positive constant $\Gamma(T)$ depending on $k$, $\sigma_1$, $g^{in}$, and $T$ such that,
\begin{eqnarray*}
(i)~~\sup_{t\in [0,T]}\int_0^n \sigma_1(\zeta) g_n(\zeta,t) d\zeta\leq \Gamma(T),
\end{eqnarray*}
and
\begin{eqnarray*}
(ii)~~ (1-\theta) \int_0^T \int_{1}^n \int_{1}^n \sigma_1(\zeta) \chi_{(0,n)}(\zeta+\eta) \Psi(\zeta,\eta) g_n({\zeta},s) g_n({\eta},s) d\eta d\zeta ds\leq \Gamma(T).
\end{eqnarray*}
\end{lem}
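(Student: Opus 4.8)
The plan is to test the weak formulation \eqref{nctp3} with the weight $\omega=\sigma_1$ (more precisely with $\sigma_1$ truncated so as to stay in $L^\infty(0,n)$, i.e. $\omega(\zeta)=\sigma_1(\zeta)\chi_{(0,n)}(\zeta)$, which is legitimate since $\sigma_1$ is continuous, hence bounded on $(0,n)$) and to estimate the resulting quadratic term. With this choice, for $(\zeta,\eta)\in(1/n,n)^2$,
\begin{equation*}
H_{\sigma_1,n}^\theta(\zeta,\eta)=\sigma_1(\zeta+\eta)\chi_{(0,n)}(\zeta+\eta)-[\sigma_1(\zeta)+\sigma_1(\eta)]\bigl(1-\theta+\theta\chi_{(0,n)}(\zeta+\eta)\bigr).
\end{equation*}
When $\zeta+\eta<n$ this equals $\sigma_1(\zeta+\eta)-\sigma_1(\zeta)-\sigma_1(\eta)$, which is non-negative and controlled from above by Lemma~\ref{lemmaconvex}~$(iii)$; when $\zeta+\eta\ge n$ it equals $-(1-\theta)[\sigma_1(\zeta)+\sigma_1(\eta)]\le 0$. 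Thus
\begin{equation*}
\int_0^n\sigma_1(\zeta)g_n(\zeta,t)d\zeta+(1-\theta)\int_0^t\!\!\int_{1/n}^n\!\!\int_{1/n}^n\sigma_1(\zeta)\chi_{\{\zeta+\eta\ge n\}}\Psi_n^\theta g_n g_n\,d\eta d\zeta ds
\end{equation*}
is bounded by $\int_0^n\sigma_1(\zeta)g_n^{in}(\zeta)d\zeta\le\mathcal I_1$ plus $\tfrac12\int_0^t\!\!\int\!\!\int\chi_{\{\zeta+\eta<n\}}\bigl[\sigma_1(\zeta+\eta)-\sigma_1(\zeta)-\sigma_1(\eta)\bigr]\Psi g_ng_n\,d\eta d\zeta ds$, and the second term on the left, after symmetrizing in $(\zeta,\eta)$ and restricting to $\zeta,\eta\ge 1$, dominates the expression appearing in part $(ii)$ (up to the factor coming from symmetrization). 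So once the quadratic gain term is estimated by $C\int_0^t(1+\sup_{[0,s]}\int_0^n\sigma_1 g_n)\,ds$, a Gronwall argument on $X_n(t):=\int_0^n\sigma_1(\zeta)g_n(\zeta,t)d\zeta$ yields $(i)$, and feeding the bound from $(i)$ back into the inequality above yields $(ii)$ at once.

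The technical heart is therefore bounding $\mathcal Q_n(s):=\int\!\!\int_{\{\zeta+\eta<n\}}[\sigma_1(\zeta+\eta)-\sigma_1(\zeta)-\sigma_1(\eta)]\Psi(\zeta,\eta)g_n(\zeta,s)g_n(\eta,s)\,d\eta d\zeta$. Using Lemma~\ref{lemmaconvex}~$(iii)$, the bracket is at most $2\frac{\zeta\sigma_1(\eta)+\eta\sigma_1(\zeta)}{\zeta+\eta}$, which by symmetry makes the integrand symmetric, so $\mathcal Q_n(s)\le 4\int\!\!\int \frac{\eta\sigma_1(\zeta)}{\zeta+\eta}\,\Psi(\zeta,\eta)g_n(\zeta,s)g_n(\eta,s)\,d\eta d\zeta$. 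Now I split the $(\zeta,\eta)$-domain according to whether each variable lies in $(0,1)$ or $(1,\infty)$ and use $(H2)$ together with $\frac{\eta}{\zeta+\eta}\le\min\{1,\eta/\zeta\}$:
\begin{itemize}
\item On $(0,1)^2$: $\Psi\le k(\zeta\eta)^{-\beta}$ and $\frac{\eta}{\zeta+\eta}\le 1$, while $\sigma_1$ is bounded on $[0,1]$; one bounds the contribution by $C\bigl(\int_0^1\zeta^{-\beta}g_n d\zeta\bigr)\bigl(\int_0^1\eta^{-\beta}g_n d\eta\bigr)\le C\|g_n(s)\|_{L^1_{-2\beta,1}}^2$, which is bounded by Lemma~\ref{LemmaEquibound}.
\item On $(0,1)\times(1,\infty)$: $\Psi\le k\eta\zeta^{-\beta}$, and using $\frac{\eta}{\zeta+\eta}\le 1$ and $\sigma_1(\zeta)\le C$ for $\zeta<1$ one gets $\le C\bigl(\int_0^1\zeta^{-\beta}g_n d\zeta\bigr)\bigl(\int_1^\infty\eta g_n d\eta\bigr)$, bounded via Lemma~\ref{LemmaEquibound} and \eqref{masstn}. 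The symmetric region $(1,\infty)\times(0,1)$ needs the factor $\frac{\eta}{\zeta+\eta}\le\eta/\zeta\le\eta$ (since $\zeta>1$) to absorb one mass factor and the sublinearity $\sigma_1(\zeta)\le\zeta\sigma_1(1)+\dots$—actually here one only needs $\sigma_1(\zeta)\le X_n(s)$-type control indirectly; concretely bound it by $C\,\|g_n^{in}\|_{L^1}\cdot\sup_{[0,s]}X_n$ using $\frac{\eta\sigma_1(\zeta)}{\zeta+\eta}\le\eta\sigma_1(\zeta)/\zeta\le\eta\sigma_1(\zeta)$ on $\zeta>1,\eta<1$ and $\sigma_1(\zeta)g_n$ integrable.
\item On $(1,\infty)^2$: $\Psi\le k(\zeta+\eta)$, so $\frac{\eta}{\zeta+\eta}\Psi\le k\eta$, giving $\le C\bigl(\int_1^\infty\eta g_n d\eta\bigr)\bigl(\int_1^\infty\sigma_1(\zeta)g_n d\zeta\bigr)\le C\,\mathcal M_1(g^{in})\,\sup_{[0,s]}X_n$.
\end{itemize}
Altogether $\mathcal Q_n(s)\le C_1+C_2\sup_{\tau\in[0,s]}X_n(\tau)$ with $C_1,C_2$ depending only on $k$, $\mathcal B$, $\mathcal M_1(g^{in})$, $\sigma_1(1)$. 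Since $X_n(t)\le\mathcal I_1+\tfrac12\int_0^t\mathcal Q_n(s)\,ds$ and $t\mapsto\sup_{[0,t]}X_n$ is non-decreasing, Gronwall's lemma gives $\sup_{[0,T]}X_n\le(\mathcal I_1+\tfrac12 C_1 T)e^{C_2 T/2}=:\Gamma(T)$, proving $(i)$; and the displayed inequality then bounds the left-hand side of $(ii)$ (which, after using symmetry and restricting the outer variable to $\zeta>1$, is at most $\tfrac14\cdot 2(1-\theta)\int_0^T\int\int_{\{\zeta+\eta\ge n\}}\sigma_1\Psi g_ng_n$) by $\mathcal I_1+\tfrac12\int_0^T\mathcal Q_n\le\Gamma(T)$ after relabelling the constant.

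The main obstacle I anticipate is the bookkeeping in the region $(1,\infty)\times(0,1)$ of $\mathcal Q_n$: there $\Psi\le k\eta\zeta^{-\beta}$ grows in the large variable $\zeta$ only through $\zeta^{-\beta}\le1$, which is harmless, but one must be careful that after applying Lemma~\ref{lemmaconvex}~$(iii)$ the surviving factor is $\frac{\eta\sigma_1(\zeta)}{\zeta+\eta}$ with $\eta<1<\zeta$, so $\frac{\eta}{\zeta+\eta}\le\eta$ and the bound becomes $C\int_1^\infty\sigma_1(\zeta)g_n(\zeta,s)d\zeta\cdot\int_0^1\eta\,\zeta^{-\beta}g_n(\eta,s)d\eta$—and the inner $\eta$-integral is finite because $\eta\,\eta^{-2\beta}$ bounds $\eta\cdot$(bounded) only when $2\beta\ge 1$; for $2\beta<1$ one instead uses $\eta<1\Rightarrow\eta\le\eta^{-2\beta}$ trivially, so the inner integral is $\le\int_0^1\eta^{-2\beta}g_n\le\mathcal B$ in all cases. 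Once this case distinction on $\beta$ is noted, every term closes; no other step presents real difficulty beyond routine use of $(H2)$, Lemma~\ref{lemmaconvex}, Lemma~\ref{LemmaEquibound}, \eqref{masstn}, and Gronwall.
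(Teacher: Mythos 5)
Your proof is correct and follows essentially the same route as the paper's: you test the weak formulation \eqref{nctp3} with $\omega=\sigma_1$, isolate the loss contribution on $\{\zeta+\eta\ge n\}$ (which, moved to the left-hand side, yields part~$(ii)$), bound the gain contribution by splitting the domain into $(0,1)^2$, $(0,1)\times(1,n)$, $(1,n)\times(0,1)$, $(1,n)^2$ and invoking $(H2)$, Lemma~\ref{lemmaconvex}, Lemma~\ref{LemmaEquibound} and \eqref{masstn}, and close with Gronwall. The one genuine (if minor) divergence is the handling of the small--small region $(0,1)^2$: you reuse Lemma~\ref{lemmaconvex}~$(iii)$ together with the boundedness of $\sigma_1$ on $[0,1]$ (so the contribution is $\lesssim k\sigma_1(1)\mathcal{B}^2$), whereas the paper instead exploits the concavity of $\sigma_1'$ and $\sigma_1(0)=\sigma_1'(0)=0$ to obtain $\tilde\sigma_1(\zeta,\eta)\le\sigma_1''(0)\zeta\eta$ (estimate~\eqref{PhL2}), giving the integrand $(\zeta\eta)^{1-\beta}$. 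Both are valid; your version is marginally lighter since it never touches $\sigma_1''$. A couple of cosmetic slips do not affect the argument: the bound $\int_0^1\eta^{1-\beta}g_n\,d\eta\le\mathcal B$ comes from Lemma~\ref{LemmaEquibound}, not from $\|g_n^{in}\|_{L^1}$ as you wrote; and the case distinction on $\beta$ in your last paragraph is unnecessary because $\eta^{1-\beta}=\eta\cdot\eta^{-\beta}\le\eta^{-\beta}\le\eta^{-2\beta}$ holds for every $\beta>0$ and $\eta\in(0,1)$. Finally, note that part~$(ii)$ as printed has $\chi_{(0,n)}(\zeta+\eta)$, but both your argument and the paper's own (the quantity $R_n$ is defined with $\chi_{[n,\infty)}(\zeta+\eta)$ over $(1/n,n)^2$) establish the bound for the complementary indicator $\chi_{[n,\infty)}(\zeta+\eta)$; this appears to be a typographical slip in the lemma statement, so you have in fact proved the intended result.
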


\begin{proof}
Let $T>0$ and $t\in (0,T)$. We set $\omega(\zeta)=\sigma_1(\zeta)$, $\zeta\in (0,n)$, into \eqref{nctp3} and obtain
\begin{align*}
& \int_0^n \sigma_1(\zeta)[g_n(\zeta,t)-g_n^{in}(\zeta)]d\zeta \\
& = \frac{1}{2} \int_0^t \int_{1/n}^n \int_{1/n}^n \tilde{\sigma}_1(\zeta,\eta) \chi_{(0,n)}(\zeta+\eta) \Psi(\zeta,\eta) g_n(\zeta,s) g_n(\eta,s) d\eta d\zeta ds \\
& \quad - \frac{1-\theta}{2} \int_0^t \int_{1/n}^n \int_{1/n}^n [\sigma_1(\zeta)+\sigma_1(\eta)] \chi_{[n,\infty)}(\zeta+\eta) \Psi(\zeta,\eta) g_n(\zeta,s) g_n(\eta,s) d\eta d\zeta ds,
\end{align*}
recalling that $\tilde{\sigma}_1(\zeta,\eta) = \sigma_1(\zeta+\eta)-\sigma_1(\zeta)-\sigma_1(\eta)$,  hence, using $(H2)$ and Lemma~\ref{lemmaconvex},
\begin{equation*}
\int_0^n \sigma_1(\zeta)[g_n(\zeta,t)-g_n^{in}(\zeta)]d\zeta \le \frac{k}{2} \sum_{i=1}^4 J_{i,n}(t) - (1-\theta) R_n(t), 
\end{equation*}
with
\begin{align*}
J_{1,n}(t) & := \int_0^t \int_0^1 \int_0^1 \tilde{\sigma}_1(\zeta,\eta) (\zeta\eta)^{-\beta} g_n(\zeta,s) g_n(\eta,s) d\eta d\zeta ds, \\
J_{2,n}(t) & := \int_0^t \int_0^1 \int_1^n \tilde{\sigma}_1(\zeta,\eta) \zeta^{-\beta} \eta g_n(\zeta,s) g_n(\eta,s) d\eta d\zeta ds, \\
J_{3,n}(t) & := \int_0^t \int_1^n \int_0^1 \tilde{\sigma}_1(\zeta,\eta) \zeta \eta^{-\beta} g_n(\zeta,s) g_n(\eta,s) d\eta d\zeta ds, \\
J_{4,n}(t) & := \int_0^t \int_1^n \int_1^n \tilde{\sigma}_1(\zeta,\eta) (\zeta+\eta) g_n(\zeta,s) g_n(\eta,s) d\eta d\zeta ds,
\end{align*}
and
\begin{equation*}
R_n(t) := \int_0^t \int_{1/n}^n \int_{1/n}^n \sigma_1(\zeta) \chi_{[n,\infty)}(\zeta+\eta) \Psi(\zeta,\eta) g_n(\zeta,s) g_n(\eta,s) d\eta d\zeta ds.
\end{equation*}
Owing to the concavity of $\sigma_1'$ and the property $\sigma_1(0)=0$, there holds
\begin{equation}
\tilde{\sigma}_1(\zeta,\eta) = \int_0^\zeta \int_0^\eta \sigma_1''(x+y) dydx \le \sigma_1''(0) \zeta \eta\ , \qquad (\zeta,\eta)\in (0,\infty)^2. \label{PhL2}
\end{equation}
By \eqref{PhL2}, Lemma~\ref{LemmaEquibound}, and Young's inequality,
\begin{align*}
J_{1,n}(t) & \le \sigma_1''(0) \int_0^t \int_0^1 \int_0^1 \zeta^{1-\beta} \eta^{-\beta} g_n(\zeta,s) g_n(\eta,s) d\eta d\zeta ds \\
& \le \sigma_1''(0) \int_0^t \left[ \int_0^1 \left( \zeta + \zeta^{-2\beta} \right) g_n(\zeta,s) d\zeta \right]^2 ds \le \sigma_1''(0) \mathcal{B}^2 t.
\end{align*}
Next, Lemma~\ref{lemmaconvex}~$(iii)$, Lemma~\ref{LemmaEquibound}, and Young's inequality give
\begin{align*}
J_{2,n}(t) = J_{3,n}(t) & \le 2 \int_0^t \int_0^1 \int_1^n \frac{\zeta \sigma_1(\eta) + \eta \sigma_1(\zeta)}{\zeta+\eta} \zeta^{-\beta} \eta g_n(\zeta,s) g_n(\eta,s) d\eta d\zeta ds \\
& \le 2 \int_0^t \int_0^1 \int_1^n \left[ \zeta^{1-\beta} \sigma_1(\eta) + \sigma_1(1) \zeta^{-\beta} \eta \right] g_n(\zeta,s) g_n(\eta,s) d\eta d\zeta ds \\
& \le 2 \int_0^t \left[ \int_0^1 \left( \zeta + \zeta^{-2\beta} \right) g_n(\zeta,s) d\zeta \right] \left[ \int_1^n \sigma_1(\eta) g_n(\eta,s) d\eta \right] ds \\
& \quad + \sigma_1(1) \int_0^t \left[ \int_0^1 \left( \zeta + \zeta^{-2\beta} \right) g_n(\zeta,s) d\zeta \right] \left[ \int_1^n \eta g_n(\eta,s) d\eta \right] ds \\
& \le 2 \sigma_1(1) \mathcal{B}^2 t + 2 \mathcal{B} \int_0^t \int_0^n \sigma_1(\eta) g_n(\eta,s) d\eta ds,
\end{align*}
and
\begin{align*}
J_{4,n}(t) & \le 2 \int_0^t \int_1^n \int_1^n \left( \eta \sigma_1(\zeta) + \zeta \sigma_1(\eta) \right) g_n(\zeta,s) g_n(\eta,s) d\eta d\zeta ds \\
& \le 4 \mathcal{B} \int_0^t \int_0^n \sigma_1(\eta) g_n(\eta,s) d\eta ds.
\end{align*}
Gathering the previous estimates, we end up with
\begin{align*}
\int_0^n \sigma_1(\zeta)[g_n(\zeta,t)-g_n^{in}(\zeta)]d\zeta & \le k \left( \frac{\sigma_1''(0)}{2} + 2 \sigma_1(1) \right) \mathcal{B}^2 t \\
& \quad + 4k\mathcal{B} \int_0^t \int_0^n \sigma_1(\eta) g_n(\eta,s) d\eta ds - (1-\theta) R_n(t),
\end{align*}
and we infer from Gronwall's lemma and \eqref{convex2} that
\begin{align*}
\int_0^n \sigma_1(\zeta) g_n(\zeta,t) d\zeta + (1-\theta) R_n(t) & \le e^{4k\mathcal{B}t} \int_0^n \sigma_1(\zeta) g_n^{in}(\zeta) d\zeta + \left( \frac{\sigma_1''(0)}{8} + \frac{\sigma_1(1)}{2} \right) \mathcal{B} e^{4k\mathcal{B}t} \\
& \le \left[ \mathcal{I_1} + \left( \sigma_1''(0) + \sigma_1(1) \right) \mathcal{B} \right] e^{4k\mathcal{B}t}.
\end{align*}
This completes the proof of Lemma~\ref{massconservation}.
\end{proof}

\subsection{Uniform integrability}

Next, our aim being to apply Dunford-Pettis' theorem, we have to prevent concentration of the sequence $(g_n)_{n\ge 1}$ on sets of arbitrary small measure. For that purpose, we need to show the following result.

\begin{lem}\label{LemmaEquiintegrable}
For any $T>0$ and $\lambda >0$, there is a positive constant $\mathcal{L}_1(\lambda, T)$ depending only on $k$, $\sigma_2$, $g^{in}$, ${\lambda}$,  and $T$ such that
\begin{equation*}
\sup_{t\in [0,T]}\int_0^{\lambda} \sigma_2\left( {\zeta}^{-\beta}  g_n(\zeta,t) \right) d\zeta \leq \mathcal{L}_1({\lambda},T).
\end{equation*}
\end{lem}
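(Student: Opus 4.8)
The goal is to bound $\int_0^\lambda \sigma_2(\zeta^{-\beta} g_n(\zeta,t))\,d\zeta$ uniformly in $n$ and $t\in[0,T]$, so that Dunford--Pettis applies to $(\zeta^{-\beta} g_n)$ on bounded intervals. The natural strategy is to differentiate $t\mapsto \int_0^\lambda \sigma_2(\zeta^{-\beta} g_n(\zeta,t))\,d\zeta$ in time, using the fact that $g_n\in\mathcal{C}^1([0,\infty);L^1(0,n))$ solves \eqref{tncfe} in the strong sense, and then estimate the resulting expression with the convexity properties of $\sigma_2$ collected in Lemma~\ref{lemmaconvex}, together with the uniform bound of Lemma~\ref{LemmaEquibound}. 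Precisely, I would write
\begin{equation*}
\frac{d}{dt}\int_0^\lambda \sigma_2\bigl(\zeta^{-\beta} g_n(\zeta,t)\bigr)\,d\zeta = \int_0^\lambda \zeta^{-\beta}\sigma_2'\bigl(\zeta^{-\beta} g_n(\zeta,t)\bigr)\,\partial_t g_n(\zeta,t)\,d\zeta,
\end{equation*}
and then split $\partial_t g_n = \mathcal{B}_c(g_n) - \mathcal{D}_{c,n}^\theta(g_n)$. The death term contributes a non-positive quantity (since $\sigma_2'\ge 0$, $g_n\ge 0$, $\Psi_n^\theta\ge 0$), so it can simply be discarded. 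The whole point is therefore to control the gain term
\begin{equation*}
\int_0^\lambda \zeta^{-\beta}\sigma_2'\bigl(\zeta^{-\beta} g_n(\zeta,t)\bigr)\,\mathcal{B}_c(g_n)(\zeta,t)\,d\zeta = \frac12 \int_0^\lambda \int_0^\zeta \zeta^{-\beta}\sigma_2'\bigl(\zeta^{-\beta} g_n(\zeta,t)\bigr) \Psi(\zeta-\eta,\eta) g_n(\zeta-\eta,t) g_n(\eta,t)\,d\eta\,d\zeta.
\end{equation*}

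\textbf{Estimating the gain term.} On the region of integration one has $\zeta\le\lambda$ and $\eta\le\zeta$, $\zeta-\eta\le\zeta$. I would use $(H2)$ to bound $\Psi(\zeta-\eta,\eta)\le k\,\phi_\beta(\zeta-\eta)\phi_\beta(\eta)$ with $\phi_\beta(a)=\max\{a^{-\beta},a\}$; in fact for volumes bounded by $\lambda$ we have $\phi_\beta(a)\le C(\lambda)(a^{-\beta}+1)$, so $\Psi(\zeta-\eta,\eta)\le C(\lambda)((\zeta-\eta)^{-\beta}+1)(\eta^{-\beta}+1)$. The crux is to dominate the product $\zeta^{-\beta}\sigma_2'(\zeta^{-\beta} g_n(\zeta))\cdot g_n(\zeta-\eta)$ using Lemma~\ref{lemmaconvex}$(ii)$, $x\sigma_2'(y)\le\sigma_2(x)+\sigma_2(y)$, applied with $x=(\zeta-\eta)^{-\beta}g_n(\zeta-\eta)$ (after absorbing suitable powers of $\zeta$, $\zeta-\eta$, $\eta$ into the measure) and $y=\zeta^{-\beta}g_n(\zeta,t)$. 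This turns the cubic-looking nonlinearity into the sum of two terms: one of the form (integrable weight)$\times\sigma_2((\zeta-\eta)^{-\beta}g_n(\zeta-\eta))\times g_n(\eta)$ and one of the form (integrable weight)$\times\sigma_2(\zeta^{-\beta}g_n(\zeta))\times g_n(\eta)$. Changing variables $\zeta'=\zeta-\eta$ in the first, and recognising that $\int_0^\lambda g_n(\eta,t)\,d\eta$ and $\int_0^1 \eta^{-\beta}g_n(\eta,t)\,d\eta$ (or more simply $\|g_n(t)\|_{L^1_{-2\beta,1}}$ via Young's inequality and Lemma~\ref{LemmaEquibound}) are bounded by $\mathcal{B}$, both terms are controlled by $C(k,\lambda)\,\mathcal{B}\int_0^\lambda \sigma_2(\zeta^{-\beta}g_n(\zeta,t))\,d\zeta$ plus possibly a constant. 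This yields a differential inequality
\begin{equation*}
\frac{d}{dt}\int_0^\lambda \sigma_2\bigl(\zeta^{-\beta} g_n(\zeta,t)\bigr)\,d\zeta \le C_1(k,\lambda)\,\mathcal{B}\int_0^\lambda \sigma_2\bigl(\zeta^{-\beta} g_n(\zeta,t)\bigr)\,d\zeta + C_2(k,\lambda)\,\mathcal{B}^2,
\end{equation*}
and Gronwall's lemma, together with the initial bound $\int_0^\lambda \sigma_2(\zeta^{-\beta}g_n^{in}(\zeta))\,d\zeta \le \mathcal{I}_2 < \infty$ from \eqref{convex2}, gives the desired estimate with $\mathcal{L}_1(\lambda,T)$ depending only on $k$, $\sigma_2$, $g^{in}$, $\lambda$, and $T$.

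\textbf{Main obstacle.} The delicate point is the bookkeeping of the singular weights in the gain-term estimate: after applying Lemma~\ref{lemmaconvex}$(ii)$ one must make sure the leftover powers of $\zeta$, $\zeta-\eta$ and $\eta$ (coming both from the kernel bound $(\zeta-\eta)^{-\beta}\eta^{-\beta}$ and from the prefactor $\zeta^{-\beta}$, and from rewriting $g_n(\zeta-\eta)$ as $(\zeta-\eta)^{\beta}\cdot(\zeta-\eta)^{-\beta}g_n(\zeta-\eta)$) all combine into quantities that are either bounded pointwise on $(0,\lambda)$ or integrable against $g_n$, controlled by $\|g_n(t)\|_{L^1_{-2\beta,1}}\le\mathcal{B}$. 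One has to be careful that the substitution $\zeta\mapsto\zeta-\eta$ does not push the argument of $\sigma_2$ outside $(0,\lambda)$ — it does not, since $\zeta-\eta<\zeta<\lambda$ — and that Lemma~\ref{lemmaconvex}$(i)$, $\sigma_2(x)\le x\sigma_2'(x)\le 2\sigma_2(x)$, is available to trade $\sigma_2'$ against $\sigma_2$ where needed. Modulo this routine but somewhat intricate juggling of exponents, the argument is a standard Gronwall estimate.
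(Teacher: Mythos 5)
Your proposal follows the paper's proof almost line for line: differentiate $t\mapsto\int_0^\lambda\sigma_2(\zeta^{-\beta}g_n(\zeta,t))\,d\zeta$, discard the non-positive contribution of the truncated death term, bound $\Psi$ on $(0,\lambda)^2$ by a constant times $(\zeta\eta)^{-\beta}$ via (H2), apply Lemma~\ref{lemmaconvex}$(ii)$ with $x=u_n(\zeta)$, $y=u_n(\zeta+\eta)$ to linearize in $\sigma_2$, change variables, invoke Lemma~\ref{LemmaEquibound}, and close with Gronwall and \eqref{convex2}. The only cosmetic difference is that the paper bounds $\phi_\beta(a)\le\lambda^{1+\beta}a^{-\beta}$ for $a\in(0,\lambda)$ directly, which avoids the additive constant $C_2\mathcal{B}^2$ you carry through Gronwall; the conclusion is the same.
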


\begin{proof} {For $(\zeta,t)\in (0,n)\times (0,\infty)$, we set $u_n({\zeta},t):={\zeta}^{-\beta} g_n(\zeta,t)$. Let $\lambda\in (1,n)$, $T>0$, and $t\in (0,T)$. Using Leibniz's rule, Fubini's theorem, and (\ref{tncfe}), we obtain
\begin{align}\label{Equiint1}
\hspace{-.2cm}\frac{d}{dt}\int_0^{\lambda} \sigma_2(u_n({\zeta},t)) d\zeta \leq & \frac{1}{2}\int_0^{\lambda}\hspace{-.1cm}\int_0^{\lambda -\eta} \sigma_2{'}(u_n({\zeta+\eta},t)) {(\zeta+\eta)}^{-\beta} \Psi_n^\theta({\zeta},{\eta}) g_n({\zeta},t) g_n(\eta,t) d{\zeta} d{\eta}.
\end{align}
It also follows from $(H2)$ that
\begin{equation}
\Psi_n^\theta(\zeta,\eta) \le \Psi(\zeta,\eta) \le 2k \lambda^{1+2\beta} (\zeta \eta)^{-\beta}, \qquad (\zeta,\eta)\in (0,\lambda)^2. \label{PhL3}
\end{equation}
We then infer from \eqref{Equiint1}, \eqref{PhL3}, Lemma~\ref{lemmaconvex}~$(ii)$ and Lemma~\ref{LemmaEquibound} that
\begin{align*}
\frac{d}{dt}\int_0^{\lambda} \sigma_2(u_n({\zeta},t))d{\zeta} \leq & k \lambda^{1+2\beta} \int_0^{\lambda} \int_0^{{\lambda}-{\eta}}\sigma_2^{'}(u_n({\zeta}+{\eta},t)) ({\zeta}+{\eta})^{-\beta} u_n(\zeta,t) u_n(\eta,t)d{\zeta} d{\eta}\nonumber\\
\leq & k \lambda^{1+2\beta} \int_0^{\lambda} \int_0^{{\lambda}-{\eta}} {\eta}^{-\beta} \left[ \sigma_2(u_n(\zeta+\eta,t)) + \sigma_2(u_n(\zeta,t)) \right] u_n(\eta,t) d\zeta d\eta\nonumber\\
\leq & 2 k \lambda^{1+2\beta} \int_0^{\lambda} \eta^{-2\beta} g_n(\eta,t) \int_0^{{\lambda}-{\eta}} \sigma_2(u_n({\zeta}+{\eta},t)) d\zeta d\eta\nonumber\\
\leq & 2 k \lambda^{1+2\beta} \mathcal{B}  \int_0^\lambda \sigma_2(u_n(\zeta,t)) d\zeta.
\end{align*}}
Then, using Gronwall's lemma, the monotonicity of $\sigma_2$, and \eqref{convex2}, we obtain
\begin{eqnarray*}
\int_0^{\lambda} \sigma_2({\zeta}^{-\beta} g_n(\zeta,t)) d\zeta \leq \mathcal{L}_1({\lambda}, T),
\end{eqnarray*}
where $\mathcal{L}_1({\lambda, T}):=\mathcal{I}_2 e^{2 k \lambda^{1+2\beta} \mathcal{B} T}$, and the proof is complete.
\end{proof}

\subsection{Time equicontinuity}

The outcome of the previous sections settles the (weak) compactness issue with respect to the volume variable. We now turn to the time variable.

\begin{lem}\label{timequic}
Let $t_2\ge t_1 \ge 0$ and $\lambda\in (1,n)$. There is a positive constant $\mathcal{L}_2(\lambda)$ depending only on $k$, $g^{in}$, and $\lambda$ such that
\begin{equation*}
\int_0^\lambda \zeta^{-\beta} |g_n(\zeta,t_2) - g_n(\zeta,t_1)| d\zeta \le \mathcal{L}_2(\lambda) (t_2-t_1).
\end{equation*}
\end{lem}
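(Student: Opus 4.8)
The plan is to estimate the quantity $\int_0^\lambda \zeta^{-\beta}|g_n(\zeta,t_2)-g_n(\zeta,t_1)|\,d\zeta$ by first writing the difference $g_n(\zeta,t_2)-g_n(\zeta,t_1)$ as a time integral of $\partial_t g_n$ over $[t_1,t_2]$, thanks to the regularity $g_n\in\mathcal{C}^1([0,\infty);L^1(0,n))$. Using the equation \eqref{tncfe} this gives
\begin{equation*}
\int_0^\lambda \zeta^{-\beta}|g_n(\zeta,t_2)-g_n(\zeta,t_1)|\,d\zeta \le \int_{t_1}^{t_2}\int_0^\lambda \zeta^{-\beta}\left( |\mathcal{B}_c(g_n)(\zeta,s)| + |\mathcal{D}_{c,n}^\theta(g_n)(\zeta,s)| \right)d\zeta\,ds,
\end{equation*}
so it suffices to bound $\int_0^\lambda \zeta^{-\beta}\mathcal{B}_c(g_n)(\zeta,s)\,d\zeta$ and $\int_0^\lambda \zeta^{-\beta}\mathcal{D}_{c,n}^\theta(g_n)(\zeta,s)\,d\zeta$ uniformly in $n$ and $s\in[0,T]$ by a constant depending only on $k$, $g^{in}$, and $\lambda$. (Strictly speaking $\lambda$ need not be less than $n$ for the statement to make sense after truncation, but following the lemma's hypothesis I take $\lambda\in(1,n)$.)

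For the death term, I would use \eqref{Nonconservativedeath} together with the bound $\Psi_n^\theta(\zeta,\eta)\le\Psi(\zeta,\eta)$ and $(H2)$. On $(0,\lambda)\times(0,\infty)$ the kernel is controlled by $k\max\{\zeta^{-\beta},\zeta\}\max\{\eta^{-\beta},\eta\}$; splitting the $\eta$-integral at $\eta=1$ and using $\zeta<\lambda$ one gets, for $\zeta\in(0,\lambda)$,
\begin{equation*}
\zeta^{-\beta}\mathcal{D}_{c,n}^\theta(g_n)(\zeta,s) \le k\,\zeta^{-\beta}\max\{\zeta^{-\beta},\zeta\}\, g_n(\zeta,s)\int_0^\infty \max\{\eta^{-\beta},\eta\}\,g_n(\eta,s)\,d\eta \le C(\lambda)\,\zeta^{-2\beta}\big(1+\zeta^{1+\beta}\big) g_n(\zeta,s)\,\|g_n(s)\|_{L^1_{-2\beta,1}},
\end{equation*}
and integrating in $\zeta$ over $(0,\lambda)$ and invoking Lemma~\ref{LemmaEquibound} twice (once for the $\eta$-integral, once after noting $\zeta^{-2\beta}(1+\zeta^{1+\beta})\le (1+\lambda^{1+\beta})(\zeta^{-2\beta}+\zeta)$ on $(0,\lambda)$) yields the desired uniform bound $C(k,g^{in},\lambda)$. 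For the birth term, after multiplying \eqref{Birthterm} by $\zeta^{-\beta}$ and integrating over $(0,\lambda)$, I would apply Fubini to rewrite it as $\tfrac12\int\int_{\{\zeta+\eta<\lambda\}}(\zeta+\eta)^{-\beta}\Psi(\zeta,\eta)g_n(\zeta,s)g_n(\eta,s)\,d\eta\,d\zeta$; on this region $(\zeta+\eta)^{-\beta}\le\zeta^{-\beta}$ and both $\zeta,\eta<\lambda$, so $\Psi(\zeta,\eta)\le 2k\lambda^{1+2\beta}(\zeta\eta)^{-\beta}$ by \eqref{PhL3}, and the integral is bounded by $k\lambda^{1+2\beta}\big(\int_0^\lambda\zeta^{-2\beta}g_n(\zeta,s)\,d\zeta\big)\big(\int_0^\lambda\eta^{-\beta}g_n(\eta,s)\,d\eta\big)$, again controlled via Lemma~\ref{LemmaEquibound}.

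Putting these two bounds together produces a constant $\mathcal{L}_2(\lambda)$ depending only on $k$, $g^{in}$ (through $\mathcal{B}=\|g^{in}\|_{L^1_{-2\beta,1}}$), and $\lambda$, such that the integrand in $s$ is bounded by $\mathcal{L}_2(\lambda)$, whence the claimed estimate $\mathcal{L}_2(\lambda)(t_2-t_1)$ after integrating over $[t_1,t_2]$. I expect the only mildly delicate point to be the bookkeeping of the singular weights near $\zeta=0$: one must be careful that every power of $\zeta$ (and of $\eta$) that appears is dominated by $\zeta^{-2\beta}+\zeta$ on the relevant bounded interval so that Lemma~\ref{LemmaEquibound} applies, and that the truncation of $\Psi_n^\theta$ for small volumes only helps (it makes the kernel smaller), so no lower bound is needed. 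There is no real obstacle here; the lemma is essentially a uniform $L^1$-in-time bound on $\partial_t(\zeta^{-\beta}g_n)$ restricted to the bounded $\zeta$-interval $(0,\lambda)$, and the moment estimate of Lemma~\ref{LemmaEquibound} is exactly what makes all the arising integrals finite and $n$-independent.
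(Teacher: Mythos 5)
Your proposal is correct and follows essentially the same route as the paper: write the difference as the time integral of $\partial_t g_n$, then bound $\int_0^\lambda \zeta^{-\beta}\mathcal{B}_c(g_n)\,d\zeta$ via Fubini and the small-volume kernel bound \eqref{PhL3}, and bound $\int_0^\lambda \zeta^{-\beta}\mathcal{D}_{c,n}^\theta(g_n)\,d\zeta$ via $(H2)$ and Lemma~\ref{LemmaEquibound}. The only differences are cosmetic (you split the $\eta$-integral for the death term at $1$ using the $\max\{\cdot^{-\beta},\cdot\}$ form of $(H2)$, while the paper splits at $\lambda$; you dominate $(\zeta+\eta)^{-\beta}$ by $\zeta^{-\beta}$, the paper by $\eta^{-\beta}$), and neither affects the final uniform-in-$n$ constant.
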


\begin{proof}
Let $t>0$. On the one hand, by Fubini's theorem, \eqref{PhL3}, and Lemma~\ref{LemmaEquibound},
\begin{align*}
\int_0^\lambda \zeta^{-\beta} \mathcal{B}_c(g_n)(\zeta,t) d\zeta & \le \frac{1}{2} \int_0^\lambda \int_0^{\lambda-\zeta} (\zeta+\eta)^{-\beta} \Psi(\zeta,\eta) g_n(\zeta,t) g_n(\eta,t) d\eta d\zeta \\
& \le k \lambda^{1+2\beta} \int_0^\lambda \int_0^\lambda \zeta^{-\beta} \eta^{-2\beta} g_n(\zeta,t) g_n(\eta,t) d\eta d\zeta \\
& \le k \lambda^{1+3\beta} \left( \int_0^\lambda \zeta^{-2\beta} g_n(\zeta,t) d\zeta \right)^2 \le k \lambda^{1+3\beta} \mathcal{B}^2.
\end{align*}
On the other hand, since
\begin{equation*}
\Psi_n^\theta(\zeta,\eta) \le \Psi(\zeta,\eta) \le 2k \lambda^\beta \eta \zeta^{-\beta},\ \qquad 0 < \zeta < \lambda < \eta < n,
\end{equation*}
we infer from \eqref{PhL3} and Lemma~\ref{LemmaEquibound} that
\begin{align*}
\int_0^\lambda \zeta^{-\beta} \mathcal{D}_{c,n}^\theta(g_n)(\zeta,t) d\zeta & \le \int_0^\lambda \int_0^n \zeta^{-\beta} \Psi(\zeta,\eta) g_n(\zeta,t) g_n(\eta,t) d\eta d\zeta \\
& \le 2k\lambda^{1+2\beta} \int_0^\lambda \int_0^\lambda \zeta^{-2\beta} \eta^{-\beta} g_n(\zeta,t) g_n(\eta,t) d\eta d\zeta \\
& \quad + 2k\lambda^\beta \int_0^\lambda \int_\lambda^n \zeta^{-\beta} \eta g_n(\zeta,t) g_n(\eta,t) d\eta d\zeta \\
& \le 2k \mathcal{B}^2 (1+\lambda^{1+\beta}) \lambda^{\beta}.
\end{align*}
Consequently, by \eqref{tncfe},
\begin{align*}
\int_0^\lambda \zeta^{-\beta} |g_n(\zeta,t_2) - g_n(\zeta,t_1)| d\zeta & \le \int_{t_1}^{t_2} \int_0^\lambda \zeta^{-\beta} \left| \frac{\partial g_n}{\partial t}(\zeta,t) \right| d\zeta dt \\
& \le \int_{t_1}^{t_2} \int_0^\lambda \zeta^{-\beta} \left[ \mathcal{B}_c(g_n)(\zeta,t) + \mathcal{D}_{c,n}^\theta(g_n)(\zeta,t) \right] d\zeta \\
& \le k \mathcal{B}^2 (2+2\lambda^{1+\beta}+\lambda^{1+2\beta}) \lambda^{\beta} (t_2-t_1),
\end{align*}
which completes the proof with $\mathcal{L}_2(\lambda):=k \mathcal{B}^2 (2+2\lambda^{1+\beta}+\lambda^{1+2\beta}) \lambda^{\beta}$.
\end{proof}

\subsection{Convergence}

We are now in a position to complete the proof of the existence of a weak solution to the SCE \eqref{sce}--\eqref{1in1}.

\begin{proof}[Proof of Theorem~\ref{thm1}] For $(\zeta,t)\in (0,n)\times (0,\infty)$, we set $u_n({\zeta},t):={\zeta}^{-\beta} g_n(\zeta,t)$. Let $T>0$ and $\lambda>1$. Owing to the superlinear growth \eqref{convex1} of $\sigma_2$ at infinity and Lemma~\ref{LemmaEquiintegrable}, we infer from Dunford-Pettis' theorem that there is a weakly compact subset $\mathcal{K}_{\lambda,T}$ of $L^1(0,\lambda)$ such that $(u_n(t))_{n\ge 1}$ lies in $\mathcal{K}_{\lambda,T}$ for all $t\in [0,T]$. Moreover, by Lemma~\ref{timequic}, $(u_n)_{n\ge 1}$ is strongly equicontinuous in $L^1(0,\lambda)$ at all $t\in (0,T)$ and thus also weakly equicontinuous in $L^1(0,\lambda)$ at all $t\in (0,T)$. A variant of \emph{Arzel\`{a}-Ascoli's theorem} \cite[Theorem~1.3.2]{Vrabie:1995} then guarantees that $(u_n)_{n\ge 1}$ is relatively compact in $\mathcal{C}_w([0,T];L^1(0,\lambda))$. This property being valid for all $T>0$ and $\lambda>1$, we use a diagonal process to obtain a subsequence of $(g_n)_{n\ge 1}$ (not relabeled) and a non-negative function $g$ such that
\begin{equation*}
g_n \longrightarrow g ~~~\mbox{in} ~~\mathcal{C}_w([0,T]; L^1(0,\lambda))
\end{equation*}
for all $T>0$ and $\lambda>1$. Owing to Lemma~\ref{massconservation} and the superlinear growth \eqref{convex1} of $\sigma_1$ at infinity, a by-now classical argument allows us to improve the previous convergence to
\begin{equation}
g_n \longrightarrow g ~~~\mbox{in} ~~\mathcal{C}_w([0,T]; L^1((0,\infty);(\zeta^{-\beta}+\zeta)d\zeta)). \label{PhL4}
\end{equation}

To complete the proof of Theorem~\ref{thm1}, it remains to show that $g$ is a weak solution to the SCE \eqref{sce}--\eqref{1in1} on $[0,\infty)$ in the sense of Definition~\ref{definition}. This step is carried out by the classical approach of \cite{Stewart:1989} with some modifications as in \cite{Camejo:2015I, Camejo:2015II} and \cite{Laurencot:2002L} to handle the convergence of the integrals for small and large volumes, respectively. In particular, on the one hand, the behavior for large volumes is controlled by the estimates of Lemma~\ref{massconservation} with the help of the superlinear growth \eqref{convex1} of $\sigma_1$ at infinity and the linear growth $(H2)$ of $\Psi$. On the other hand, the behavior for small volumes is handled by $(H2)$, Lemma~\ref{LemmaEquibound}, and \eqref{PhL4}.\\

Finally, $g$ being a weak solution to \eqref{sce}--\eqref{1in1} on $[0,\infty)$ in the sense of Definition~\ref{definition}, it is mass-conserving according to Theorem~\ref{themcs}, which completes the proof of Theorem~\ref{thm1}.
\end{proof}

\section*{Acknowledgments}

This work was supported by University Grant Commission (UGC), India, for providing Ph.D fellowship to PKB. AKG would like to thank  Science and Engineering Research Board (SERB), Department of Science and Technology (DST), India for providing funding support through the project $YSS/2015/001306$.

\bibliographystyle{plain}

\end{document}